\definecolor{LinkColor}{rgb}{0,0,0} 
\newcommand{\ii}{\mathbf{i}}
\newcommand{\jj}{\mathbf{j}}
\newcommand{\kk}{\mathbf{k}}
\newcommand{\slunlhd}{%
  \mathrel{\mathpalette\sl@unlhd\relax}%
}
\newcommand{\sl@unlhd}[2]{%
  \sbox\z@{$#1\lhd$}%
  \sbox\tw@{$#1\leqslant$}%
  \dimen@=\ht\tw@
  \advance\dimen@-\ht\z@
  \ifx#1\displaystyle
    \advance\dimen@ .2pt
  \else
    \ifx#1\textstyle
      \advance\dimen@ .2pt
    \fi
  \fi
  \ooalign{\raisebox{\dimen@}{$\m@th#1\lhd$}\cr$\m@th#1\leqslant$\cr}%
}
\newtheorem{maintheorem}{Theorem}[]
\newtheorem{maincorollary}[maintheorem]{Corollary}
\newtheorem{theorem}{Theorem}[]
\newtheorem{lemma}[theorem]{Lemma}
\newtheorem{proposition}[theorem]{Proposition}
\theoremstyle{definition}
\newcommand{\cut}{\textsf{cut}\xspace}
\newcommand{\SL}{\operatorname{SL}}
\newcommand{\GL}{\operatorname{GL}}
\newcommand{\F}{\mathbb{F}}
\newcommand{\Z}{\textup{Z}}
\newcommand{\ZZ}{\mathbb{Z}}
\renewcommand{\O}{\mathcal{O}}
\newcommand{\Stab}{\operatorname{Stab}}
\newcommand{\Core}{\operatorname{Core}}
\definecolor{wildstrawberry}{rgb}{1.0, 0.26, 0.64}
\newcommand{\GEN}[1]{\left\langle #1 \right\rangle}
\newcommand{\qand}{\quad \text{and} \quad}
\title{The Gruenberg-Kegel graph of finite solvable rational groups}
\author{Sara C. Debón}
\author{Diego García-Lucas}
\author{\'{A}ngel del R\'{\i}o}
\begin{document}

 \begin{abstract}
    A finite group $G$ is said to be \textit{rational} if every character of $G$ is rational-valued. The \textit{Gruenberg-Kegel graph} of a finite group $G$ is the undirected graph whose vertices are the primes dividing the order of $G$ and an edge connects a pair of  different vertices $p$ and $q$ whenever $G$ has an element of order $pq$. In this paper, we complete the classification of the Gruenberg-Kegel graphs of finite solvable rational groups initiated in \cite{BKMdR}.
\end{abstract}

\maketitle

In this paper $G$ is a finite group.
The following  three  conditions are equivalent, and when they are satisfied we say that $G$ is \textit{rational}.
\begin{itemize}
 \item Every character of $G$ takes values in the field of rational numbers.
 \item For every element $g$ of $G$, all the generators of $\GEN{g}$ are conjugate in $G$.
 \item For every $g\in G$, $[N_G(\GEN{g}):C_G(g)]=\varphi(|g|)$, where $\varphi$ denotes the Euler totient function.
\end{itemize}

The \textit{Gruenberg-Kegel graph} of $G$, abbreviated \textit{GK-graph} of $G$, is the undirected graph $\Gamma_{GK}(G)$ whose vertices are the primes dividing the order of $G$ and  an edge connects a pair of  different vertices $p$ and $q$ whenever $G$ has an element of order $pq$.
Some authors use the term \textit{`prime graph'} to refer to the Gruenberg-Kegel graph, but since prime graphs appear in many different contexts, we will restrict the terminology to just GK-graph.

Several mathematicians contributed to the study of the Gruenberg-Kegel graph of some classes of finite groups. For instance, if $G$ is rational and solvable, then the set of possible vertices of $\Gamma_{GK}(G)$ is contained in $\{2,3,5 \}$ (see \cite{Gow}); while if $\Gamma_{GK}(G)$ is a tree, then its size is bounded by $8$ (see \cite{LucidoTree}).
In \cite{GKLNS}, the graphs that occur as the Gruenberg-Kegel graph of a finite solvable group are characterized in a purely graph-theoretical way.
An analogue for finite groups whose order is divisible by at most five different primes is obtained in \cite{GKKM14}.
Theorem~D of \cite{BKMdR} contains an attempt to classify the GK-graphs of finite solvable rational groups and only for one graph it remained to be decided whether it is the GK-graph of a finite solvable rational group or not.  The aim of this paper is to complete this attempt. Technically, we answer negatively \cite[Question F]{BKMdR} and hence finish the proof of the following theorem:

\begin{maintheorem}\label{maintheorem:rational}
The Gruenberg-Kegel graphs of non-trivial finite solvable rational groups are precisely the following:

\begin{figure}[ht!]
\begin{tabular}{ccccccc}
		\begin{subfigure}{.1\textwidth}
			\centering 
			\begin{tikzpicture};
			\node[label=west:{$2$}] at (0,0) (2){};
			\foreach \p in {2}{
				\draw[fill=black] (\p) circle (0.075cm);
			}
			\end{tikzpicture}
		\end{subfigure}
&		\begin{subfigure}{.1\textwidth}
			\centering 
			\begin{tikzpicture}
			\node[label=west:{$2$}] at (0,1) (2){};
			\node[label=east:{$3$}] at (0.5,1) (3){};
			\foreach \p in {2,3}{
				\draw[fill=black] (\p) circle (0.075cm);
			}
			\end{tikzpicture}
		\end{subfigure}
&		\begin{subfigure}{.1\textwidth}
			\centering 
			\begin{tikzpicture}
			\node[label=west:{$2$}] at (0,1) (2){};
			\node[label=east:{$3$}] at (0.5,1) (3){};
			\foreach \p in {2,3}{
				\draw[fill=black] (\p) circle (0.075cm);
			}
			\draw (2)--(3);
			\end{tikzpicture}
		\end{subfigure} 
&		\begin{subfigure}{.1\textwidth}
			\centering
			\begin{tikzpicture}
			\node[label=west:{$2$}] at (0,0.5) (2){};
			\node[label=west:{$5$}] at (0,0) (3){};
			\foreach \p in {2,3}{
				\draw[fill=black] (\p) circle (0.075cm);
			}
			\end{tikzpicture}
		\end{subfigure}
& \begin{subfigure}{.1\textwidth}
			\centering 
			\begin{tikzpicture}
			\node[label=west:{$2$}] at (0,0.5) (2){};
			\node[label=west:{$5$}] at (0,0) (5){};
			\foreach \p in {2,3}{
				\draw[fill=black] (\p) circle (0.075cm);
			}
			\draw (2)--(3);
			\end{tikzpicture}
		\end{subfigure}
& \begin{subfigure}{.1\textwidth}
			\centering
			\begin{tikzpicture}
			\node[label=west:{$2$}] at (0,0.5) (2){};
			\node[label=east:{$3$}] at (0.5,0.5) (3){};
			\node[label=west:{$5$}] at (0,0) (5){};
			\foreach \p in {2,3,5}{
				\draw[fill=black] (\p) circle (0.075cm);
			}
			\draw (2)--(3);
			\draw (2)--(5);
			\draw (3)--(5);
			\end{tikzpicture}
		\end{subfigure}	
\end{tabular}	
\end{figure}
\end{maintheorem}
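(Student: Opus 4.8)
The plan is to prove both inclusions: that each of the six displayed graphs is realized by some finite solvable rational group, and that no other graph occurs. For the realization direction I would exhibit explicit groups (small symmetric-type or Frobenius-type solvable rational groups) for each graph, most of which are already recorded in \cite{BKMdR}; this direction is routine once the candidates are known. The substance lies in the restriction direction, and the first move is a clean reduction of the possible vertex sets. By Gow's theorem (see \cite{Gow}) the primes dividing the order of a solvable rational group lie in $\{2,3,5\}$. Next, since every element of a rational group is conjugate to its inverse, a nontrivial rational group cannot have odd order, so $2$ is always a vertex. Finally, if $G$ is a nontrivial $p$-group and $g \in G$ has order $p$, then $\N_G(\GEN{g})/\C_G(g)$ is simultaneously a section of the $p$-group $G$ and, by rationality, of order $\varphi(p)=p-1$; as $p-1$ is a power of $p$ only when $p=2$, the single-prime case yields exactly the one-vertex graph on $\{2\}$. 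Together these reduce the admissible vertex sets to $\{2\}$, $\{2,3\}$, $\{2,5\}$ and $\{2,3,5\}$.

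For the two-prime vertex sets $\{2,3\}$ and $\{2,5\}$ one checks that both the connected and the disconnected graph occur, accounting for four of the six graphs; these realizations and the corresponding exclusions are supplied by Theorem~D of \cite{BKMdR}. The whole weight of the theorem therefore falls on the three-prime case: I must show that if $2$, $3$ and $5$ all divide the order of a solvable rational group $G$, then $\Gamma_{GK}(G)$ is the complete graph on $\{2,3,5\}$, i.e.\ $G$ has elements of orders $6$, $10$ and $15$. By \cite[Theorem~D]{BKMdR} every incomplete graph on $\{2,3,5\}$ except one has already been excluded, so it suffices to rule out the single remaining graph---the content of \cite[Question~F]{BKMdR}---in which $3$ and $5$ are not joined, i.e.\ $G$ has no element of order $15$.

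To do this I would argue by contradiction with a minimal counterexample: let $G$ be a solvable rational group of least order whose GK-graph is the forbidden graph. Rationality pins down the relevant local actions: for $g$ of order $3$ the quotient $\N_G(\GEN{g})/\C_G(g)$ is $\Aut(C_3)\cong C_2$, so an involution inverts every $3$-element, while for $g$ of order $5$ the automizer of $\GEN{g}$ is all of $\Aut(C_5)\cong C_4$, so a cyclic group of order $4$ acts faithfully on the $5$-elements. I would then study a Hall $\{3,5\}$-subgroup and the action of a Sylow $3$-subgroup on the $5$-chief factors of $G$, using that $\C_G(F(G))\le F(G)$ and that quotients of rational groups are again rational to run the minimality. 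The guiding arithmetic obstruction is that $3\nmid|\Aut(C_5)|=4$: a $3$-element normalizing a \emph{cyclic} $5$-section must centralize it and hence create an element of order $15$. Thus in the counterexample every faithful action of a $3$-element on a $5$-section must occur on a factor of rank at least $2$, which is where the analysis becomes delicate.

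The main obstacle is exactly this rank-$\ge 2$ case. Because $3\mid|\GL(2,5)|$, a $3$-element can act fixed-point-freely on an elementary abelian $5$-group of rank $2$, so the cheap arithmetic contradiction is unavailable and one needs a genuinely global argument. I expect the contradiction to come from reconciling three requirements that the rationality of $G$ imposes simultaneously on the $5$-chief factors: the faithful order-$4$ action forced at the prime $5$, the inverting involution forced at the prime $3$, and the fixed-point structure that rationality imposes on the action of the full normalizer. Pinning down the module structure of the $5$-chief factors under the induced $\{2,3\}$-action---and showing that no such configuration can avoid producing a centralizing $3$-element, and hence an element of order $15$---is the heart of the matter, and is the step I would expect to require the most care, possibly supplemented by a direct check of the finitely many small configurations that survive the structural reductions.
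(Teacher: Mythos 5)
Your reduction is correct as far as it goes, and it coincides with the paper's own opening move: by Gow's theorem the vertex set lies in $\{2,3,5\}$, the two-prime and single-prime cases together with all but one incomplete graph on $\{2,3,5\}$ are settled by Theorem~D of \cite{BKMdR}, and everything comes down to showing that no finite solvable rational group has GK-graph $2$--$3$, $2$--$5$ with $3$ and $5$ non-adjacent. But that reduction is essentially a citation of prior work; the entire content of the present paper is the proof that this last graph does not occur, and your proposal stops exactly where that proof begins. The passages ``I expect the contradiction to come from reconciling three requirements'' and ``is the step I would expect to require the most care'' are an acknowledgement that the decisive argument has not been supplied, so there is a genuine gap rather than an alternative route.

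Concretely, what is missing is the following. The paper's argument rests on Heged\"us's theorem that the Sylow $5$-subgroup $V$ of a solvable rational group is normal and elementary abelian (a fact your sketch never invokes), which turns the problem into the study of a faithful simple $\F_5S$-module for a rational $\{2,3\}$-group $S$ with the eigenvector property. One then writes $V=W^S$ for a suitable induced module and must confront Heged\"us's classification of the possible $H/K$ acting on $W$: the cases $Q_8$, $C_3\rtimes C_4$, $\SL(2,3)$, two explicit subgroups of $\GL(4,5)$, and a group of order $144$. Two of these are dismissed quickly, but each of the remaining four needs a separate, nontrivial elimination (the $\SL(2,3)$ case alone takes up roughly half the paper and requires constructing a specific rational element of $V$ from the three $H_2$-orbits on $W\setminus\{1\}$ and iterating the resulting relation to produce an element of $\C_S(a)\cap\bigcap_{t\in T_0}K^t$). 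Your proposed fallback of ``a direct check of the finitely many small configurations'' does not work as stated, because the kernel $K$ and the transversal $T$ are not bounded a priori; the contradictions come from global core arguments ($\Core_S(K)=1$) and from carefully chosen rational elements, not from a finite enumeration. Your arithmetic observation that $3\nmid|\Aut(C_5)|$ correctly locates where the difficulty lives (rank $\ge 2$ chief factors), but locating the difficulty is not the same as resolving it.
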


As a consequence we have the following:

\begin{maincorollary}
If $G$ is a finite solvable rational group of order divisible by $15$, then $G$ has elements of order $6$, $10$ and $15$.
\end{maincorollary}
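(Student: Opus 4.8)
The plan is to read the conclusion straight off Theorem~\ref{maintheorem:rational}. First I would observe that since $15 \mid |G|$, both $3$ and $5$ divide $|G|$, so both $3$ and $5$ occur as vertices of the GK-graph $\Gamma_{GK}(G)$. This is the only structural input needed from the hypothesis.

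Next I would invoke Theorem~\ref{maintheorem:rational}, which enumerates the six graphs that can arise as $\Gamma_{GK}(G)$ for a non-trivial finite solvable rational group. Scanning that list, exactly one of the six has both $3$ and $5$ among its vertices, namely the complete graph on $\{2,3,5\}$ displayed in the last figure; the remaining five graphs have vertex set contained in $\{2\}$, $\{2,3\}$, or $\{2,5\}$, none of which contains $3$ and $5$ simultaneously. Hence $\Gamma_{GK}(G)$ must be precisely this complete graph.

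Finally I would translate the three edges back into group-theoretic statements using the definition of the GK-graph: the edge joining $2$ and $3$ means $G$ has an element of order $6$, the edge joining $2$ and $5$ means $G$ has an element of order $10$, and the edge joining $3$ and $5$ means $G$ has an element of order $15$. Since every step is a direct appeal to the classification together with the definition of the graph, I expect no real obstacle; the entire content of the corollary is already packaged inside Theorem~\ref{maintheorem:rational}, and the only point needing care is the routine verification that no other listed graph contains both $3$ and $5$ as vertices.
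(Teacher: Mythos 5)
Your proposal is correct and is exactly the argument the paper intends: the corollary is stated as an immediate consequence of Theorem~\ref{maintheorem:rational}, and your observation that the complete graph on $\{2,3,5\}$ is the only listed graph containing both $3$ and $5$ as vertices is the whole content of the deduction. No issues.
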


We start fixing some basic notation and recalling some definitions. 
We use standard group theoretical notation. For example, $N_G(H)$ denotes the normalizer of a subgroup $H$ in $G$ and $C_G(X)$ denotes the centralizer of a subset $X$ in $G$.
Moreover, $C_n$  and  $ Q_n$ denote the cyclic and quaternion groups of order $n$, and $ \GL(n,q)$ and $\SL(n,q)$ denote the general and special linear $n$-dimensional groups over the field $\F_q$ with $q$ elements.
If $G$ is a group and $p$ a prime, then $G_p$ denotes a Sylow $p$-subgroup of $G$.

By default a $G$-module is a right $G$-module.
We use exponential notation for the action of $G$ on a $G$-module $V$ and multiplicative notation for the internal operation in $V$, i.e. we consider $V$ as an abelian multiplicative group, and if $g\in G$ and $v\in V$, then $v\cdot g=v^g$. Then $V\rtimes G$ denotes the corresponding semidirect product, so that $v^g=g^{-1}vg$.
In this paper we use always this notation for an elementary abelian $5$-group $V$.
Then we consider $V$ as a right $\F_5G$-module, where $\F_5=\ZZ/5\ZZ$, the field with $5$ elements.
In other words, if $v\in V$, $\alpha\in \F_5$ and $g\in G$, then $v\cdot \alpha =v^\alpha$, where $\alpha$ is identified with an integer modulo $5$, and $v\cdot g=v^g=g^{-1}vg$.
Although we consider $V$ as a multiplicative group inside the semidirect product $V\rtimes G$, we take advantage of considering $V$ as a vector space over $\F_5$ to use standard linear algebra notation. For example, in some cases $V$ is going to be identified with $\F_5^n$ for some integer $n$ and if $K$ is the kernel of the action of $G$ on $V$, then $G/K$ is identified with a subgroup of $\GL(n,5)$ (see e.g., \Cref{Heg4.9} below).

We say that $V_G$ has the \emph{eigenvector property} if for every $\alpha\in \F_5\setminus \{0\}$, every element $v$ of $V$ is an eigenvector with eigenvalue $\alpha$ for some $g\in G$, i.e. $v^g=v^\alpha$.
As the group of units of $\F_5$ is generated by $2$, $V_G$ has the eigenvector property if and only if for every $v\in V$, there is some $g\in G$ such that $v^g=v^2$.

If $H$ is a subgroup of $G$ and $W$ is an $H$-module, then $W^G$ denotes the induced $G$-module, i.e. $W^G= \bigoplus_{t\in T} W^t$, where $T$ is a right transversal of $H$ in $G$ and if $w\in W$, $t\in T$, $g\in G$ and $tg=hr$ with $h\in H$ and $r\in T$, then $(w^t)^g=(w^h)^r$.
Observe that if $w\in W\setminus \{1\}$ and $g\in G$, then $w^g\in W$ if and only if $g\in H$. In particular, $\Stab_G(W)=H$.

By a theorem by P. Hegedüs, a Sylow $5$-subgroup of a finite solvable rational group is normal and elementary abelian \cite{Heg05}. Our proof relies on the following proposition which is one of the main ingredients in the proof of Hegedüs' Theorem.

\begin{proposition}[\cite{Heg05}, Section 4.9]\label{Heg4.9}
    Let $G$ be a finite solvable rational group such that the Sylow $5$-subgroup $V$ of $G$ is minimal normal. Let $S$ be a complement of $V$ in $G$, and regard $V$ as an $\F_5 S$-module. Let  $H$  be a minimal subgroup of $S$ such that there exists an $\F_5  H$-submodule $W$ of $V$ with $V=W^{S}$ and the Brauer character of $W$ is rational. Let $K=C_S(W)$. Then one of the following holds:
    \begin{enumerate}
        \item \label{casea} $H/K\cong Q_8$ and $W=\F_5^2$.
        \item \label{caseb} $H/K\cong C_3\rtimes C_4$ and $W=\F_5^2$.
        \item\label{casec} $H/K\cong \SL(2,3)$ and $W=\F_5^2$.
        \item\label{cased} $H/K$ is isomorphic to the subgroup   of $\GL(4,5)$ generated by
        \begin{equation}\label{eq:matrices} \alpha=\left(\begin{matrix}
                                3&3&0&0 \\
                                4&1&0&0 \\
                                0&0&1&1 \\
                                0&0&2&3 
                                \end{matrix}\right),  \quad \beta=\left(\begin{matrix}0&0&2&3 \\
                                0&0&3&3 \\
                                1&4&0&0 \\
                                4&4&0&0\end{matrix}\right), \qand 
           \gamma=\left(\begin{matrix}
                                1&0&0&0 \\
                                0&3&0&0 \\
                                0&0&1&0 \\
                                0&0&0&2
                                \end{matrix}\right) ,
        \end{equation}
        where the action by conjugation on $W=\F_5^4$ is given by right  multiplication. 

        \item \label{casee}$H/K$ is isomorphic to the subgroup of $\GL(4,5)$ generated by $\alpha,\beta$ and $\gamma^2$, where the action by conjugation on $W=\F_5^4$ is given by right multiplication.
        \item \label{casef} $H/K$ has order $144$.
    \end{enumerate}
\end{proposition}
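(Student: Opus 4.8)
The plan is to translate the three hypotheses (rationality, $V=W^{S}$, rational Brauer character) into a concrete linear-algebra problem about the single module $W$, and then classify the finitely many possibilities.

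\textbf{Reduction to an eigenvector problem on $W$.} Since $V$ is abelian and minimal normal in $G=V\rtimes S$, every $S$-submodule of $V$ is normal in $G$, so $V$ is an irreducible $\F_5 S$-module; as $V=W^{S}$ and induction is additive, $W$ is an irreducible $\F_5 H$-module. Applying the third characterization of rationality from the introduction to an element $v\in V$ of order $5$ gives $[\N_G(\GEN v):\C_G(v)]=\varphi(5)=4$, so every $v\in V$ satisfies $v^{s}=v^{2}$ for some $s\in S$ (conjugation by the $V$-part is trivial); that is, $V_S$ has the eigenvector property. Now for $w\in W\setminus\{1\}$ the element $s$ with $w^{s}=w^{2}$ carries $w$ into $W$, so $s\in\Stab_S(W)=H$; hence $W_H$, and equivalently $W_{\bar H}$ for $\bar H:=H/K$, also has the eigenvector property. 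Moreover any $s\in S$ centralizing $W$ fixes each $w\in W$, so $s\in\Stab_S(W)=H$ and $K=C_S(W)\le H$ is exactly the kernel of the action of $H$ on $W$. Thus $\bar H$ is a faithful irreducible solvable subgroup of $\GL(W)\cong\GL(n,5)$, $n=\dim_{\F_5}W$, with rational Brauer character and the eigenvector property, and everything reduces to classifying such pairs $(\bar H,W)$.

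\textbf{Constraints from rationality and the covering.} As $S$ is a $5'$-group, $\F_5 H$ is semisimple and $W$ lifts to characteristic $0$; rationality of the Brauer character means the lifted character is $\QQ$-valued, so for each $h\in\bar H$ the eigenvalue multiset is stable under $\mathrm{Gal}(\QQ(\zeta_{|h|})/\QQ)$. Because $2\in\F_5^{\times}$ has order $4$, an eigenvalue $2$ lifts to a primitive fourth root of unity and is paired with its conjugate, whose reduction is $3=2^{3}$; hence $\dim\ker(h-2)=\dim\ker(h-3)$ for every $h$. The eigenvector property reads $W=\bigcup_{h\in\bar H}\ker(h-2)$, a covering of $W$ by the $2$-eigenspaces of its elements. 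This alone excludes $n=1$, since a scalar action on $\F_5$ has non-rational character, and it will be the main lever for bounding $n$.

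\textbf{Structure and case analysis.} Next I would use the minimality of $H$ to reduce to the primitive case: if $W$ were induced from an $\F_5 H'$-module with rational Brauer character for some proper $H'<H$, induction in stages would exhibit a smaller admissible subgroup of $S$, contradicting minimality, so $W$ may be taken primitive. For a primitive irreducible solvable linear group the Fitting subgroup is of symplectic type (a central product of a cyclic group with extraspecial groups) and the faithful irreducible module has a correspondingly constrained prime-power dimension; combined with the pairing $\dim\ker(h-2)=\dim\ker(h-3)$ and the covering $W=\bigcup_h\ker(h-2)$, this should force $n\in\{2,4\}$. In dimension $2$ the normal $2$-subgroup is pinned to $Q_8$, and the admissible overgroups in $\GL(2,5)$ acting irreducibly with rational character and covering property are exactly $Q_8$, $C_3\rtimes C_4$ and $\SL(2,3)$, giving \ref{casea}--\ref{casec}. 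In dimension $4$ a similar but longer analysis of the overgroups of the symplectic-type normal subgroup produces the two explicit matrix groups generated by $\alpha,\beta,\gamma$ and by $\alpha,\beta,\gamma^{2}$ in \eqref{eq:matrices}, together with the exceptional group of order $144$, i.e. \ref{cased}--\ref{casef}.

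\textbf{Main obstacle.} The reduction in the first paragraph is clean; the real work lies in the dimension bound and the dimension-$4$ enumeration. Turning the covering $W=\bigcup_h\ker(h-2)$ by proper eigenspaces into a genuine restriction on primitive solvable linear groups—a union of proper subspaces cannot fill $\F_5^{n}$ unless the group is large and very specifically structured—is the delicate step that rules out $n\ge 6$. Separating the three surviving $4$-dimensional groups, in particular verifying that $\gamma$ and $\gamma^{2}$ yield two genuinely distinct admissible groups and isolating the exceptional order-$144$ group, will require explicit computation with the generators in \eqref{eq:matrices}.
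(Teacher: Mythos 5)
This proposition is not proved in the paper at all: it is imported verbatim from Heged\"us \cite{Heg05}, Section 4.9, and the paper only uses it as a black box. So there is no in-paper proof to measure your attempt against; what can be judged is whether your proposal would constitute a proof on its own. The reduction in your first two paragraphs is correct and cleanly done ($W$ irreducible, $K\le H$ the kernel, the eigenvector property descending from $V_S$ to $W_{H/K}$, and the Galois pairing forcing $\dim\ker(h-2)=\dim\ker(h-3)$). But the classification itself --- which is the entire content of the statement --- is not carried out: ``this should force $n\in\{2,4\}$'' and ``a similar but longer analysis \dots\ produces the two explicit matrix groups'' are assertions, not arguments, and you say as much in your final paragraph. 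A crude count does not suffice for the dimension bound: each $h$ covers at most $5^{n/2}$ vectors, so the covering only gives $|H/K|\ge 5^{n/2}$, which is far weaker than the order bounds available for primitive solvable subgroups of $\GL(n,5)$; you would at minimum need to bring in Gow's theorem (cited in the introduction) that $S$ is a $\{2,3\}$-group, which your sketch never uses, and then work through the symplectic-type normal subgroup structure case by case.

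There is also a genuine gap in the primitivity reduction. The minimality of $H$ is minimality among subgroups admitting a submodule $W$ with $V=W^S$ \emph{and rational Brauer character}. If $W$ is imprimitive, say $W=U^{H}$ for an $\F_5H'$-module $U$ with $H'<H$, then indeed $V=U^S$, but nothing guarantees that the Brauer character of $U$ is rational, so induction in stages does not immediately contradict minimality. This is exactly the point where Heged\"us has to work harder (and where the non-primitive-looking $4$-dimensional groups in cases \eqref{cased}--\eqref{casef} enter). As it stands, your proposal is a plausible plan whose hard core --- the dimension bound, the enumeration of the $2$- and $4$-dimensional groups, and the isolation of the order-$144$ exception --- remains entirely open.
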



 A group $G$ is said to be \cut if for every $g\in G$, every generator of $\GEN{g}$ is conjugate to $g$ or $g^{-1}$ in $G$. Another important tool for our proof is the following lemma.

\begin{lemma}[\cite{BKMdR}, Lemma 3.5]\label{BKMdR3.5} Let $G$ be a finite \cut group and let $G_p$ and $G_q$ be Sylow subgroups of $G$ for two distinct primes $p$ and $q$ dividing the order of $G$. Suppose that $G_p$ is normal in $G$ and $G$ does not contain an element of order $pq$. Then $G_q$ is either the quaternion group of order 8 or a cyclic group of order dividing 4 or $q$.
\end{lemma}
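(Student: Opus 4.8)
The plan is to first recognise a Frobenius group, then to translate the \cut hypothesis into an arithmetic condition on the automorphisms that conjugation induces on a cyclic subgroup, and finally to read off the allowed orders. Since $G_p$ is normal in $G$, the set $F:=G_pG_q$ is a subgroup and $G_q$ acts on $G_p$ by conjugation. If a nontrivial $y\in G_q$ fixed a nontrivial $x\in G_p$, then $x$ and $y$ would commute and $xy$ would have order $pq$, against the hypothesis; hence $C_{G_p}(y)=1$ for every $1\neq y\in G_q$. Therefore $F$ is a Frobenius group with kernel $G_p$ and complement $G_q$, so $G_q$ is a Frobenius complement. By the classical structure theorem for Frobenius complements their Sylow subgroups are cyclic for odd primes and cyclic or generalised quaternion for the prime $2$; as $G_q$ is itself a $q$-group, this shows that $G_q$ is cyclic when $q$ is odd and is cyclic or a generalised quaternion group $Q_{2^m}$ when $q=2$.

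For the \cut part, for any $g\in G$ I would let $I(g)$ denote the image of $N_G(\GEN{g})/C_G(g)$ in $\Aut(\GEN{g})\cong(\ZZ/|g|\ZZ)^\times$, which is exactly the set of $k$ with $g^k$ conjugate to $g$. The \cut condition is then equivalent to $I(g)\cup(-I(g))=(\ZZ/|g|\ZZ)^\times$, and in particular $|(\ZZ/|g|\ZZ)^\times|\le 2\,|I(g)|$. The decisive remark is that whenever $\GEN{g}$ contains a Sylow $q$-subgroup of $G$ which lies in $C_G(g)$, the group $I(g)$ has trivial Sylow $q$-part: a Sylow $q$-subgroup of $N_G(\GEN{g})$ then maps to $1$ under $N_G(\GEN{g})\to I(g)$.

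I would now specialise $g$. If $G_q=\GEN{y}$ is cyclic of order $q^n$, take $g=y$; here $G_q\le C_G(y)$ is a Sylow $q$-subgroup of $N_G(\GEN{y})$, so $q\nmid|I(y)|$. For odd $q$ the group $(\ZZ/q^n\ZZ)^\times$ is cyclic of order $q^{n-1}(q-1)$ with $q'$-part of order $q-1$, hence $|I(y)|$ divides $q-1$ and the inequality $q^{n-1}(q-1)\le 2(q-1)$ forces $n=1$, giving $G_q\cong C_q$. For $q=2$ one gets $I(y)=1$, so the covering condition reduces to $(\ZZ/2^n\ZZ)^\times=\{\pm1\}$ and hence $n\le 2$, i.e. $|G_q|$ divides $4$. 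Finally, if $G_q\cong Q_{2^m}$ with $m\ge3$, let $y$ generate the cyclic subgroup of index $2$, so $|y|=2^{m-1}$ and $C_{G_q}(y)=\GEN{y}$; then $G_q$ is a Sylow $2$-subgroup of $N_G(\GEN{y})$ whose image in $\Aut(\GEN{y})$ is $\{\pm1\}$ (inversion), and since $\Aut(\GEN{y})$ is a $2$-group this forces $I(y)=\{\pm1\}$ exactly. The covering condition then yields $(\ZZ/2^{m-1}\ZZ)^\times=\{\pm1\}$, i.e. $\varphi(2^{m-1})=2$, whence $m=3$ and $G_q\cong Q_8$.

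The step I expect to be the main obstacle is this last one. Establishing the Frobenius structure is routine, and the cyclic cases fall out immediately once the Sylow $q$-part of $I(y)$ is seen to vanish. Ruling out the larger generalised quaternion groups, however, is not a crude comparison of orders: it relies on pinning down that conjugation in the whole of $G$ realises only inversion on the index-$2$ cyclic subgroup, so that for $m\ge4$ the generators of that subgroup other than $y^{\pm1}$ are left uncovered by $I(y)\cup(-I(y))$, contradicting \cut. Making the identification $I(y)=\{\pm1\}$ airtight, rather than merely bounding $|I(y)|$, is therefore the crux.
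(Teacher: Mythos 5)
This lemma is imported verbatim from \cite{BKMdR} (Lemma 3.5) and the present paper gives no proof of it, so there is nothing in the source to compare your argument against; I can only assess it on its own terms, and on those terms it is correct and complete. The two pillars are both sound: (i) since $G_p\lhd G$ and no nontrivial element of $G_q$ centralizes a nontrivial element of $G_p$ (otherwise a power of the product of two commuting elements of coprime orders $p^a$ and $q^b$ would have order exactly $pq$ --- your phrase ``$xy$ would have order $pq$'' is slightly imprecise but harmless), $G_pG_q$ is a Frobenius group and the classical structure theory of Frobenius complements puts $G_q$ among the cyclic and generalized quaternion groups; (ii) the translation of \cut into $I(g)\cup(-I(g))=(\ZZ/|g|\ZZ)^\times$ is exactly right, and your observation that $I(g)$ has trivial Sylow $q$-part when a Sylow $q$-subgroup of $G$ sits inside $\GEN{g}$ is justified because $C_G(g)$ is normal in $N_G(\GEN{g})$ and therefore swallows every Sylow $q$-subgroup of the normalizer. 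The step you flag as the crux is also handled correctly: for $G_q\cong Q_{2^m}$ the quotient $N_G(\GEN{y})/C_G(y)$ embeds in the $2$-group $\Aut(\GEN{y})$, hence equals the image of any of its Sylow $2$-subgroups, hence equals the image of $G_q$, which is precisely $\{\pm 1\}$; the covering condition then kills $m\ge 4$. The only stylistic remark is that the Frobenius complement theorem is a fairly heavy input for the conclusion ``cyclic or generalized quaternion''; it is conceivable that the original proof in \cite{BKMdR} extracts this more cheaply, but invoking it is perfectly legitimate and yields a clean, self-contained argument.
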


We start now the proof of \Cref{maintheorem:rational}. By Theorem~D in \cite{BKMdR}, it suffices to prove that $(3-2-5)$ is not the GK-graph of a finite solvable rational group. By means of contradiction we fix a finite solvable rational group $G$ of minimal order with $\Gamma_{GK}(G)=(3-2-5)$. By Hegedüs Theorem, $G$ has a unique Sylow $5$-subgroup $V$, which is   elementary abelian. Then $G=V\rtimes S$, where $S$ is a $\{2,3\}$-Hall subgroup $S$ of $G$. Thus we consider $V$ as a right $\F_5S$-module as explained above. Moreover, $S$ is rational since the class of solvable rational groups is closed under epimorphic images.

The rationality condition implies that $V_S$ has the eigenvector property. In other words, for every $v\in V$ there exists some $s\in S$ so that $v^s=v^2$.

Moreover, if $U$ is a normal subgroup of $G$ such that $U$ is properly contained in $V$, then $G/U$ is a rational solvable group whose GK-graph is $(3-2-5)$ by Theorem~D in \cite{BKMdR}. Then $U=1$ because of the minimality of $G$. Thus, $V$ is a  minimal normal subgroup of $G$, or equivalently, $V_{\F_5S}$ is simple. 

As every rational group is \cut, \Cref{BKMdR3.5} applies to $G$ and hence a Sylow $3$-subgroup of $S$ is cyclic of order $3$.
We fix a Sylow $2$-subgroup $S_2$ of $S$.
Observe that $S_2$ is not normal in $S$, since otherwise $S/S_2\cong C_3$ would be rational.
Thus $S$ has exactly three Sylow $2$-subgroups, namely $S_2$, $S_2^a$ and $S_2^{a^2}$, where $a$ is any element of $S$ of order $3$.
As $G$ has not elements of order $15$, if $v\in  V\setminus\{1\}$, then $v^a\neq v$.

The action of $S$ on $V$ is faithful. Indeed, otherwise the kernel $N$ of this action is a normal non-trivial $2$-subgroup of $G$ such that $G/N$ is a solvable rational group with $\Gamma_{GK}(G/N)=(3-2-5)$ by Theorem~D in \cite{BKMdR}, in contradiction with the minimality hypothesis.

\begin{lemma}\label{3elH} Let $H$ be a subgroup of $S$ and $W$ a $\mathbb{F}_5H$-submodule of $V$. If $V=W^S$, then the action of $H$ on $W$ has the eigenvector property and $H$ contains all the elements of order $3$ of $S$.
\end{lemma}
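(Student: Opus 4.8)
The plan is to treat the two assertions separately, in both cases exploiting the identity $\Stab_S(W)=H$ recorded after the definition of the induced module (applied with $S$ in the role of the ambient group).

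For the eigenvector property of the action of $H$ on $W$, I would argue directly and avoid any structural input. Fix $w\in W\setminus\{1\}$. Since $V_S$ has the eigenvector property, there is some $s\in S$ with $w^s=w^2$. As $W$ is an $\F_5H$-submodule, $w^2=w\cdot 2$ again lies in $W$, and $w^2\neq 1$ because scalar multiplication by $2$ is invertible on $V$; hence $w^s=w^2\in W\setminus\{1\}$. By the observation that, for $w\in W\setminus\{1\}$, the relation $w^g\in W$ forces $g\in\Stab_S(W)=H$, we conclude $s\in H$. Thus for every $w\in W$ there is an element of $H$ mapping $w$ to $w^2$, which, since the units of $\F_5$ are generated by $2$, is exactly the eigenvector property of $W_H$.

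For the second assertion, write $V=\bigoplus_{t\in T}W^t$ with $T$ a right transversal of $H$ in $S$, and let $b\in S$ be any element of order $3$. The induced-module formula shows that conjugation by $b$ permutes the summands $W^t$ according to right multiplication on the cosets $H\backslash S$; in particular the orbit of $W$ under $\langle b\rangle$ is a set of summands of size $1$ or $3$. I claim it has size $1$. Suppose not, so that $W$, $W^b$ and $W^{b^2}$ are three distinct summands. Choosing any $w\in W\setminus\{1\}$ and setting $v=w\,w^b\,w^{b^2}$ (product in the abelian group $V$), one checks that $v^b=w^b\,w^{b^2}\,w^{b^3}=v$ since $b^3=1$, while $v\neq 1$ because its component in the summand $W$ is exactly $w\neq 1$. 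Hence $b$ would fix a nontrivial element of $V$.

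This contradicts the established fact that every element of order $3$ in $S$ acts without nontrivial fixed points on $V$ (as $G$ has no element of order $15$). Therefore the orbit of $W$ has size $1$, that is $W^b=W$, so $b\in\Stab_S(W)=H$; since $b$ was an arbitrary element of order $3$ of $S$, all of them lie in $H$. The computations here are light, and the conceptual crux — the only point I expect to require care — is the averaging step: recognizing that the fixed-point-freeness of order-$3$ elements is incompatible with their nontrivially cyclically permuting the conjugate summands $W, W^b, W^{b^2}$, so that the only possibility left is $W^b=W$.
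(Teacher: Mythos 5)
Your proposal is correct and takes essentially the same route as the paper: the eigenvector step is the identical application of $w^s=w^2\in W\setminus\{1\}$ forcing $s\in\Stab_S(W)=H$, and your averaging element $v=w\,w^b\,w^{b^2}$ combined with the fixed-point-freeness of order-$3$ elements on $V$ is exactly the paper's argument, merely organized as a contradiction where the paper deduces $v=1$ directly and then concludes $b\in H$ from the direct-sum decomposition.
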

\begin{proof}
Let $a$ be an element of $S$ of order $3$ and $w\in W\setminus \{1\}$. Take $v=ww^aw^{a^2}$. As $v^a=v$, necessarily $v=1$ and hence $a\in H$. 
Moreover, from the eigenvector property of $V_S$, there exists $s\in S$ such that $w^s= w^2\in W\cap W^s$, and necessarily $s\in H$, as desired.
\end{proof}

From now on, $H$ is a minimal subgroup of $S$ such that there exists an $\mathbb{F}_5H$-submodule $W$ with $V=W^S$ and with rational Brauer character. Then the kernel $K$ of the action of $H$ on $W$ is a $2$-group. Applying Lemma \ref{3elH} we conclude that $H$ contains all the elements of order $3$ of $S$ and has the eigenvector property on $W$. 
We fix an element $a$ of $S$ of order $3$.
Let $A$ denote the subgroup of $S$ generated by the elements of order $3$;
then $A$ is the smallest normal subgroup of $S$ containing $a$, and $|A|=3$ if and only if $S$ has a normal Sylow $3$-subgroup.
Otherwise $A/A'$ is an abelian $3$-group with $[A:A']=3$.
Therefore, in any case $A_2=A'$ is the unique Sylow $2$-subgroup of $A$, and hence it is normal in $S$.
Moreover, as $\GEN{a}$ is a Sylow 3-subgroup of $A$ and as $A$ is normal in $S$,   the Frattini argument implies that $S=AN_S(\GEN{a})$.
As $A\subseteq H$, we also have $S=HN_ S(\GEN{a})$ and hence $N_S(\GEN{a})$ contains a right transversal $T$ of $H$ in $S$ containing $1$ which will be fixed throughout. So $V=\bigoplus_{t\in T} W^t$.

Moreover, $H/K$ is one of the groups of Proposition \ref{Heg4.9}. Observe that cases \eqref{casea} and \eqref{casef} can be immediately excluded, as the former does not contain elements of order $3$, while in the latter $9$ divides $|H/K|$ but a Sylow $3$-subgroup of $S$ has order $3$, by Lemma \ref{BKMdR3.5}. 
Thus from now on $W$, $H$ and $K$ are as in one of the cases \eqref{caseb}, \eqref{casec}, \eqref{cased} and \eqref{casee} of \Cref{Heg4.9}. In \Cref{fact:removeCasea} we prove that \eqref{caseb} cannot hold, and in \Cref{Casosde} we prove that cases \eqref{cased} and \eqref{casee} are also not possible. The proof finishes by dismissing case \eqref{casec}, which requires an ad hoc argument.

\begin{lemma} \label{Kcorefree}
$\Core_S(K)=\bigcap_{s\in S_2}K^s=\bigcap_{t\in T}K^t=1$.
\end{lemma}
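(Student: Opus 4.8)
The plan is to rewrite each of the three intersections as the pointwise centraliser in $S$ of an $\F_5$-subspace of $V$, and then invoke the faithfulness of the action of $S$ on $V$. First I would record that $K=C_S(W)$: the kernel of the action of $H$ on $W$ is $C_H(W)$, and if $s\in S$ centralises $W$ then $w^s=w\in W$ for $1\neq w\in W$ forces $s\in\Stab_S(W)=H$, so $C_S(W)=C_H(W)=K$. Since conjugation acts $\F_5$-linearly, $K^s=C_S(W)^s=C_S(W^s)$ for all $s\in S$, and hence for any subset $\mathcal C\subseteq S$ one has $\bigcap_{s\in\mathcal C}K^s=C_S\bigl(\sum_{s\in\mathcal C}W^s\bigr)$.

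With this identity two of the three equalities are immediate. Taking $\mathcal C=S$ gives $\Core_S(K)=C_S(\sum_{s\in S}W^s)=C_S(W^S)=C_S(V)=1$, the last equality because $S$ acts faithfully on $V$; and taking $\mathcal C=T$ gives $\bigcap_{t\in T}K^t=C_S(\sum_{t\in T}W^t)=C_S(V)=1$ because $V=\bigoplus_{t\in T}W^t$.

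The remaining case $\mathcal C=S_2$ is where the real work lies. Writing $U=\sum_{s\in S_2}W^s$ and $D=\bigcap_{s\in S_2}K^s=C_S(U)$, I would first exploit that $a\in\GEN{a}\subseteq H=\Stab_S(W)$, so that $W^{a^{-i}}=W$ and therefore $\sum_{s\in S_2^{a^i}}W^s=(\sum_{s'\in S_2}W^{s'})^{a^i}=U^{a^i}$, giving $\bigcap_{s\in S_2^{a^i}}K^s=C_S(U^{a^i})=D^{a^i}$ for $i=0,1,2$. Since $S_2,S_2^a,S_2^{a^2}$ are exactly the Sylow $2$-subgroups of $S$, it then suffices to prove $\bigcap_{s\in P}K^s=1$ for a single, conveniently chosen Sylow $2$-subgroup $P$; triviality for the fixed $S_2$ follows by conjugating back by a power of $a$.

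Finally I would choose $P$ to contain a Sylow $2$-subgroup $P_0$ of $H$, which is possible because $P_0$ lies in one of the three Sylow $2$-subgroups of $S$. Then $H\cap P$ is a $2$-subgroup of $H$ containing $P_0$, so $H\cap P=P_0\in\Syl_2(H)$, and a Lagrange count yields $|HP|=|H|\,|P|/|H\cap P|=|S|$, i.e.\ $S=HP$. As $S=HT$ and $W^h=W$ for $h\in H$, every summand $W^t$ with $t\in T$ coincides with some $W^s$, $s\in P$, so $\sum_{s\in P}W^s=\bigoplus_{t\in T}W^t=V$ and hence $\bigcap_{s\in P}K^s=C_S(V)=1$. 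I expect this last step to be the main obstacle: its content is the equality $S=HP$, equivalently that for a suitable Sylow $2$-subgroup the conjugates $\{W^s:s\in P\}$ already exhaust the direct summands of $V$, and the whole argument hinges on being able to choose $P$ so that $H\cap P$ is a full Sylow $2$-subgroup of $H$.
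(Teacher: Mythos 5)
Your proof is correct, but it takes a genuinely different route from the paper's. The paper proves the first equality directly: since $a\in H\subseteq N_S(K)$ and $\GEN{a}$ is a left transversal of $S_2$ in $S$, one has $K^{a^is}=K^s$, whence $\Core_S(K)=\bigcap_{s\in S_2}K^s$; it then shows $\bigcap_{t\in T}K^t=1$ by an element-wise computation (for $x$ in that intersection and $v=\prod_{t\in T}w_t^t$, one gets $v^x=\prod_{t}w_t^{k_tt}=v$ with $k_t=txt^{-1}\in K$, so $x$ lies in the trivial kernel of the action of $S$ on $V$), and concludes from the containment $\Core_S(K)\subseteq\bigcap_{t\in T}K^t$. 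Your reformulation $K=C_S(W)$ and $\bigcap_{s\in\mathcal C}K^s=C_S\bigl(\sum_{s\in\mathcal C}W^s\bigr)$ is a clean repackaging of the same two ingredients ($V=\bigoplus_{t\in T}W^t$ and faithfulness), and it makes the cases $\mathcal C=S$ and $\mathcal C=T$ immediate; the price is paid at $\mathcal C=S_2$, where you need the Sylow argument ($H\cap P\in\Syl_2(H)$, the count $|HP|=|S|$, hence $\{W^s:s\in P\}$ spans $V$) together with conjugation by powers of $a$ to pass between the three Sylow $2$-subgroups. That argument is correct, but the paper's coset observation $S=\GEN{a}S_2$ with $a$ normalizing $K$ disposes of that case in one line, and you could have grafted it onto your centralizer framework to shorten the proof considerably.
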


\begin{proof} 
First of all, by Lemma \ref{3elH}, $\GEN{a}\subseteq H\subseteq N_S(K)$.
Moreover, $\GEN{a}$ is a left transversal of $S_2$ in $S$. Thus $\Core_S(K)=\bigcap_{s\in S_2} \bigcap_{i=0}^2 K^{a^is}=\bigcap_{s\in S_2} K^s$.

Now it suffices to see that $\bigcap_{t\in T} K^t=1$. Take $x\in \bigcap_{t\in T} K^t$ and for every $t\in T$ let $k_t=txt^{-1}$.
Let $v$ be an arbitrary element of $V$. Then $v=\prod_{t\in T}{w_t}^{t}$ for unique $w_t\in W$ for each $t\in T$. As each $k_t\in K$,
		$$v^x=\prod_{t\in T}{w_t}^{tx}=\prod_{t\in T}{w_t}^{k_tt}=\prod_{t\in T}{w_t}^{t}=v.$$
Therefore $x$ belongs to the kernel of the action of $S$ on $V$. As this action is faithful, $x=1$.
\end{proof} 

\begin{lemma}\label{A=3} 
$|A|=3$ if and only if $H/K\cong C_3\rtimes C_4$.
\end{lemma}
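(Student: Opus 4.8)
The plan is to reduce the asserted equivalence to a single question — whether the Sylow $3$-subgroup of $H/K$ is normal — and then to answer that question by going through the surviving cases of \Cref{Heg4.9}. Throughout I would use the data already assembled: by \Cref{BKMdR3.5} the group $\GEN a$ is a Sylow $3$-subgroup of $S$ and of $H$ (of order $3$), $A\subseteq H$, the kernel $K$ is a $2$-group with $\Core_S(K)=1$ by \Cref{Kcorefree}, and $A'=A_2$ is the unique Sylow $2$-subgroup of $A$ with $[A:A']=3$, so that $|A|=3$ if and only if $A'=1$.

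First I would establish the central claim: $|A|=3$ \emph{if and only if} the Sylow $3$-subgroup $\GEN aK/K$ of $H/K$ is normal. If $|A|=3$ then $A=\GEN a$ contains every element of order $3$ of $S$, so $\GEN a$ is the unique Sylow $3$-subgroup of $S$; being the unique Sylow $3$-subgroup it is normal in $S$, hence in $H$, and its image is a normal Sylow $3$-subgroup of $H/K$. For the converse, assume $\GEN aK/K\trianglelefteq H/K$. Since $K$ is a $2$-group, each element of order $3$ of $S$ lies in $H$ and has image of order $3$ in $H/K$, hence lies in the unique Sylow $3$-subgroup $\GEN aK/K$; therefore $A\subseteq \GEN aK$, and Dedekind's law gives $A=\GEN a\,(A\cap K)$. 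As $A\cap K$ is a $2$-subgroup of $A$ it is contained in $A'$, and $|A|=3|A\cap K|=3|A'|$ forces $A\cap K=A'$, so $A'\subseteq K$. Because $A'\trianglelefteq S$, this puts $A'\subseteq\Core_S(K)=1$, giving $A'=1$ and $|A|=3$.

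It then remains to decide, among cases \eqref{caseb}, \eqref{casec}, \eqref{cased} and \eqref{casee}, for which of them $H/K$ has a normal Sylow $3$-subgroup. In \eqref{caseb} the normal factor of $C_3\rtimes C_4$ is precisely the Sylow $3$-subgroup, so it is normal; in \eqref{casec} the group $\SL(2,3)$ has four Sylow $3$-subgroups. For \eqref{cased} and \eqref{casee} I would compute directly with \eqref{eq:matrices}: writing $\alpha=\diag(A_1,A_2)$ and $\beta=\left(\begin{smallmatrix}0&B_1\\B_2&0\end{smallmatrix}\right)$ with $\beta^2=-I$, one gets $\beta\alpha\beta^{-1}=\diag(-B_1A_2B_2,\,-B_2A_1B_1)$, and a quick check shows $-B_1A_2B_2\notin\{I,A_1,A_1^2\}$, so $\beta\alpha\beta^{-1}\notin\GEN\alpha$. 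As $\beta$ belongs to both groups, the Sylow $3$-subgroup $\GEN\alpha$ is not normal in either. Hence the Sylow $3$-subgroup of $H/K$ is normal exactly when $H/K\cong C_3\rtimes C_4$, which together with the central claim yields the lemma.

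The hard part will be the converse of the central claim, where the purely local hypothesis $A'\subseteq K$ must be upgraded to the global conclusion $A'=1$; the decisive input is that $A'$ is normal in $S$ while $K$ is core-free in $S$ by \Cref{Kcorefree}. By contrast the case analysis for \eqref{cased} and \eqref{casee} is routine: it only requires observing that $\beta$ moves the Sylow $3$-subgroup $\GEN\alpha$ off itself, so one never needs the precise orders of these two matrix groups.
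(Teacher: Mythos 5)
Your proof is correct and follows essentially the same route as the paper's: both directions reduce the equivalence to whether the Sylow $3$-subgroup of $H/K$ is normal, and then inspect the four surviving cases of \Cref{Heg4.9}. The only (minor) variation is in the converse, where the paper shows $[A,\GEN{a}]\subseteq K$ and then conjugates over $T\subseteq N_S(\GEN{a})$ to land in $\bigcap_{t\in T}K^t=1$, deducing $a\in Z(A)$, whereas you use Dedekind's law to identify $A'=A\cap K$ and then kill $A'$ via its normality in $S$ together with \Cref{Kcorefree} --- both arguments rest on the same core-freeness of $K$, and your explicit matrix check that $\beta\alpha\beta^{-1}\notin\GEN{\alpha}$ in cases \eqref{cased} and \eqref{casee} is a correct way to carry out the case inspection the paper leaves implicit.
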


\begin{proof}
Clearly $|A|=3$ if and only if $A=\GEN{a}$ if and only if $\GEN{a}$ is normal in $S$. In that case, $K\GEN{a}/K$ is a normal Sylow $3$-subgroup of $H/K$ and considering the four options (b), (c), (d) and (e) of
\Cref{Heg4.9} it follows that $H/K\cong C_3\rtimes C_4$.
Conversely, suppose that $H/K\cong C_3\rtimes C_4$.
Then $A\subseteq \GEN{K,a}\lhd H$ and hence $[A,\GEN{a}]\subseteq [K,\GEN{a}] \subseteq K$.  
Moreover, as $T\subseteq N_S(\GEN{a})$, for every $t\in T$ we have $[A,\GEN{a}]=[A^t,\GEN{a^t}]\subseteq K^t$, and therefore $[A,\GEN{a}]\subseteq \bigcap_{t\in T} K^t=\Core_S(K)=1$, by \Cref{Kcorefree}. Therefore, $a$ is central in $A$ and as $A$ is generated by elements of order $3$ and its Sylow 3-subgroup has order $3$ necessarily $A=\GEN{a}$. 
\end{proof}

We define $L$ as the smallest subgroup of $H$ containing $K$ and the elements of order $3$ of $S$, that is $L=KA$.
Note that $L$ and $L_2=KA_2$ are both normal in $H$.
By inspecting the four options for $H/K$ it follows that
\begin{equation}\label{LK}
L/K\cong \begin{cases} C_3, & \text{if } H/K\cong C_3\rtimes C_4; \\
	\SL(2,3), & \text{otherwise}.\end{cases}
\end{equation}

\begin{lemma}\label{Z_not_trivial}
  $Z(S)\neq 1$ and if $H/K\not\cong C_3\rtimes C_4$, then  $Z(S)\cap A_2\cong C_2$.
\end{lemma}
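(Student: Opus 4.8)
The plan is to show that $Z(S)$ contains the scalar $-\mathrm{Id}_V$. Write $\rho\colon S\to\GL(V)$ for the (faithful) action. Since $V$ is a simple $\F_5S$-module, Schur's lemma forces $Z(S)$ to act by scalars; in particular an element $z$ with $\rho(z)=-\mathrm{Id}_V$ generates a central subgroup of order $2$, and conversely a nontrivial central $2$-element produces $-\mathrm{Id}_V$ after squaring. Moreover $Z(S)$ is a $2$-group: an order-$3$ central element would force a normal (hence direct-factor) Sylow $3$-subgroup and thus $S_2\lhd S$, contradicting that $S/S_2\cong C_3$ cannot be rational. Thus both assertions reduce to producing $u\in S$ with $\rho(u)=-\mathrm{Id}_V$ (automatically central); for the second assertion one must place $u$ in $A_2$ and prove uniqueness.

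The structural key is that $A$ fixes every Clifford component. Indeed $A\subseteq H$ and $A\lhd S$, so for $x\in A$ and $t\in T$ we have $txt^{-1}\in A\subseteq H$, i.e. $tx\in Ht$, so $x$ stabilises $W^t$. Hence the permutation action of $S$ on $\{W^t:t\in T\}$ has kernel $S_0\supseteq A$; as $A$ contains a Sylow $3$-subgroup, the image $P:=S/S_0\hookrightarrow\mathrm{Sym}(T)$ is a transitive $2$-group, and $r:=|T|=[S:H]$ is a power of $2$. I will use the standard fact that for a transitive $2$-group the permutation module $\F_2^{\,T}$ has one-dimensional socle, namely the fixed line $\langle\mathbf 1\rangle$ spanned by the all-ones vector, and that this socle lies inside \emph{every} nonzero $\F_2P$-submodule (the only simple module in characteristic $2$ is trivial, and its multiplicity in the socle equals the number of $P$-orbits, which is $1$).

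Assume first $H/K\not\cong C_3\rtimes C_4$, so $\rho(A)\cong\SL(2,3)$ and $A_2K/K\cong Q_8$; on each $W^t$ the central involution $z_0$ acts as $-\mathrm{Id}$, since $z_0\in Z(H/K)$ and $W$ is simple. Then $\Omega:=\Omega_1(Z(A_2))\neq 1$ maps, via the central $\pm1$-scalars it induces on the components, $S$-equivariantly and injectively into $\F_2^{\,T}$ with $S$ acting through $P$; being a nonzero submodule it contains $\mathbf 1$, and its preimage $u\in\Omega\subseteq A_2$ satisfies $\rho(u)=-\mathrm{Id}_V$. This gives $Z(S)\neq 1$. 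For uniqueness, any $u'\in Z(S)\cap A_2$ acts as a scalar matrix lying in $\rho(A_2)\cong Q_8$; the only scalar matrices in $Q_8$ are $\pm\mathrm{Id}_W$, so the scalar is $\pm1$ and $Z(S)\cap A_2=\langle u\rangle\cong C_2$.

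Finally suppose $H/K\cong C_3\rtimes C_4$, so $A_2=1$ and the central involution now lies \emph{outside} the diagonal subgroup $A$. Here the plan is to run the same socle argument on the normal subgroup $X\lhd S$ of those elements of $S_0$ acting as $\pm\mathrm{Id}$ on every component: $X$ is elementary abelian and embeds $S$-equivariantly into $\F_2^{\,T}$, so once $X\neq1$ the socle yields $u\in X$ with $\rho(u)=-\mathrm{Id}_V$. One checks $O_2(S_0)\subseteq X$ (a normal $2$-subgroup of $S_0$ induces a normal $2$-subgroup of $\rho_t(S_0)\le C_3\rtimes C_4$, hence lies in the scalar centre $\langle z_0\rangle$ on each component), so it suffices to prove $O_2(S_0)\neq1$; this follows from the Fitting inclusion $C_{S_0}(F(S_0))\le F(S_0)$ once the degenerate cases $S_0\cong C_3$ and $S_0\cong S_3$ are dispatched directly, using $C_4\le H/K$. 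The main obstacle throughout is exactly this \emph{synchronisation}: exhibiting one group element acting as $-\mathrm{Id}_V$ simultaneously on all components rather than componentwise. The transitive $2$-group socle argument removes the naive parity obstruction (which would otherwise permit an ``even-weight'' submodule avoiding $\mathbf 1$), and the genuinely delicate point is this last case, where the required involution is non-diagonal and one must first manufacture $X$ and verify $O_2(S_0)\neq1$.
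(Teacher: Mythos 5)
Your argument for the case $H/K\not\cong C_3\rtimes C_4$ is correct and genuinely different from the paper's. The paper picks $z\in A_2\cap Z(S_2)$ and shows by hand that $z$ centralises $a$, using uniqueness of $2$-parts inside $L^s/K^s\cong\SL(2,3)$ together with $\Core_S(K)=1$; you instead observe that $\Omega_1(Z(A_2))$ is a nontrivial elementary abelian normal subgroup of $S$ whose elements act as $\pm\mathrm{Id}$ on each Clifford component, embed it $S$-equivariantly into the permutation module $\F_2^{T}$ for the transitive $2$-group $S/S_0$, and use that every nonzero submodule contains the all-ones vector to synchronise the signs and produce $-\mathrm{Id}_V$. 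That socle fact is right, and the uniqueness step also goes through (a nontrivial element of $Z(S)\cap A_2$ acts as $\pm\mathrm{Id}$ on each component and centrality plus transitivity forces a constant sign). Two imprecisions to fix: $\rho(A)$ and $\rho(A_2)$ are not $\SL(2,3)$ and $Q_8$ but subdirect products of copies of them, so all your statements should be about the images $AK^t/K^t$ and $A_2K^t/K^t$ on a single component; and ``Schur forces scalars'' should read ``elements of the endomorphism field $\mathrm{End}_{\F_5S}(V)$'' (which suffices, since the unique involution of $\F_{5^k}^{\times}$ is $-1$), because absolute simplicity of $V$ is not free.

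The genuine gap is in the case $H/K\cong C_3\rtimes C_4$. Your reduction to $O_2(S_0)\neq1$ is fine (given that $a$ acts nontrivially on each $W^t$, so each projection of $S_0$ contains the normal $C_3$ of $C_3\rtimes C_4$, whose $2$-core is central), and the Fitting inequality together with the embedding of $S_0$ into $\prod_t H^t/K^t$ does kill $S_0\cong S_3$ (no subgroup of $C_3\rtimes C_4$ is a quotient of $S_3$ containing $C_3$). But the remaining case $S_0=\langle a\rangle$ is not ``dispatched directly, using $C_4\le H/K$'': you give no argument, and I do not see one that avoids first producing a central involution of $S$ — which is exactly what is being proved. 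The only clean way I can see to exclude $S_0=\langle a\rangle$ is the paper's own route: $\langle a\rangle$ is a normal Sylow $3$-subgroup (Lemma \ref{A=3}), so $C_S(a)$ is normal of index $[N_S(\GEN{a}):C_S(a)]=2$ by rationality, hence $C_S(a)_2$ is a nontrivial normal $2$-subgroup of $S$ and $1\neq C_S(a)_2\cap Z(S_2)\subseteq Z(S)$; a power of such an element acts as $-\mathrm{Id}_V$ and so lies in $S_0$, contradicting $S_0=\langle a\rangle$. Once you invoke that, however, $Z(S)\neq1$ is already proved and the whole $X$/$O_2(S_0)$ construction for this case becomes superfluous. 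So either supply a genuine direct elimination of $S_0\cong C_3$, or replace this half of your proof by the short Frattini argument.
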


\begin{proof}
Suppose that $H/K\cong C_3\rtimes C_4$. Then the Sylow $3$-subgroup $\langle a \rangle$ of $S$ is normal in $S$, by \Cref{A=3}, and therefore so is $C_S(a)_2$ by the Frattini argument. Moreover, $|S|\ge 12$ and by the rationality hypothesis, $[S:C_S(a)]=[N_S(\GEN{a}):C_S(a)]=2$. Thus $C_S(a)_2\ne 1$.
In particular, $1 \neq C_S(a)_2 \cap Z(S_2)\subseteq Z(S)$, as desired.

Suppose that $H/K\not\cong C_3\rtimes C_4$.  By \Cref{A=3}, $A_2$ is a non-trivial Sylow $2$-subgroup of $A$ and hence $L_2=KA_2$ is the unique Sylow $2$-subgroup of $L$.
As $A_2$ is normal in $S$, there is $z\in A_2\cap Z(S_2)$ with $z\neq 1$.

We claim that $z\not \in K^s$ for every $s\in S_2$. Indeed, if $s\in S_2$ with $z\in K^s$, then for every $x\in S_2$ we have $z=z^{s^{-1}x}\in K^x$. Thus $z\in \bigcap_{x\in S_2}K^x=1$, by Lemma \ref{Kcorefree}, yielding a contradiction.

Let $s\in S_2$ and recall that $L/K\cong \SL(2,3)$ and hence $|Z(L/K)|=|Z(L_2/K)|=2$. 
Since $z\in A_2\cap Z(S_2)$, we have $z\in Z(L_2^s)$.
Then $K^sz\in Z(L_2^s/K^s)$,  $a\in A=A^s\subseteq L^s$ and $| K^sa|=3$. Thus $|K^sz|=2$ and $|K^sza|=6$ because $z\not \in K^s$. Denote by $z_2$ the $2$-part of $za$ and $z_3$ its $3$-part. Thus, $K^sza=K^sz_2z_3$ and $K^sz=K^sz_2$ because of uniqueness of the $2$-part.
This shows that $zz_2^{-1}\in K^s$ for every $s\in S_2$.
By \Cref{Kcorefree}, $z=z_2$ and hence $z_3=z_2^{-1}za=a$, so that $z$ commutes with $a$. It follows that $   z\in Z(S_2)\cap C_S(a) \subseteq Z(S)$.  Thus $1\neq \GEN{z}\subseteq Z(S)\cap  A_2$. The result follows since  $|Z(S)\cap A_2|\leq 2$ because  $\Z(S)\cap A_2\subseteq L$ and, as $Z(S)\cap K=1$, by \Cref{Kcorefree}, we have that  $Z(S)\cap A_2\cong  K(Z(S)\cap A_2) /K \subseteq Z(L/K)\cong C_2$.
\end{proof}

The following lemma also holds in the case where $H/K\cong \SL(2,3)$ but since its proof uses a different argument, we delay it until \Cref{lema:existed(ificil)}.

\begin{lemma} \label{lem:existd}
Let $H/K$ be one of the groups in cases \eqref{caseb}, \eqref{cased} or \eqref{casee} of \Cref{Heg4.9}. Then $C_S(a) \not \subseteq H$.
\end{lemma}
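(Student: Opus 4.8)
The plan is to argue by contradiction: suppose $C_S(a)\subseteq H$, so that $C_S(a)=C_H(a)$. The heart of the argument is the observation that, in cases \eqref{caseb}, \eqref{cased} and \eqref{casee}, the element $a$ is conjugate in $H$ to $a^{-1}$. This is precisely what fails for $\SL(2,3)$ (its order-$3$ elements are not conjugate to their inverses), and is the reason case \eqref{casec} has to be handled by the separate argument. In case \eqref{caseb} this reality of $a$ is built into the structure of $C_3\rtimes C_4$, where the generator of the cyclic Sylow $2$-subgroup inverts $a$; this lifts from $H/K$ to $H$ because $K$ is a $2$-group and $\GEN{a}$ is a Sylow $3$-subgroup of $H$. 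In cases \eqref{cased} and \eqref{casee} it is an explicit computation in the matrix groups of \Cref{Heg4.9}: since these groups preserve the decomposition $\F_5^4=\F_5^2\oplus\F_5^2$ and $\beta$ realises the swap of the two summands, one checks that $\beta$, adjusted by a suitable block-diagonal element, conjugates $\alpha$ to $\alpha^{-1}$.

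Granting this, fix $h\in H$ with $a^h=a^{-1}$. By rationality, $[N_S(\GEN{a}):C_S(a)]=\varphi(3)=2$, so every $w\in N_S(\GEN{a})$ either centralises $a$, whence $w\in C_S(a)\subseteq H$, or inverts $a$, whence $a^{wh^{-1}}=a$ gives $wh^{-1}\in C_S(a)\subseteq H$ and again $w\in H$. Therefore $N_S(\GEN{a})\subseteq H$, and since $S=HN_S(\GEN{a})$ we conclude $S=H$.

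It remains to derive a contradiction from $S=H$. In this situation $T=\{1\}$ and $V=W$, so $K$, the kernel of the action of $H=S$ on $W=V$, is trivial by faithfulness of the action of $S$ on $V$; thus $S\cong H/K$. Now $S$ is rational, being an epimorphic image of the rational group $G$. On the other hand, in each of cases \eqref{caseb}, \eqref{cased} and \eqref{casee} the group $H/K$ is \emph{not} rational: in case \eqref{caseb}, $C_3\rtimes C_4$ contains an element of order $4$ that is not conjugate to its inverse, and in cases \eqref{cased} and \eqref{casee} the same can be verified directly for an order-$4$ element of the explicit subgroup of $\GL(4,5)$. Note that this is compatible with $a$ being real: the witness of non-rationality is a $2$-element, not $a$. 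This contradiction proves $C_S(a)\not\subseteq H$.

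The only non-formal ingredients are the two explicit facts about the four-dimensional groups in cases \eqref{cased} and \eqref{casee}: that $\alpha$ is $H$-conjugate to $\alpha^{-1}$, which powers the reduction to $S=H$, and that $H/K$ nevertheless fails to be rational, which closes the argument. I expect these computations — most transparently organised through the imprimitive action on $\F_5^2\oplus\F_5^2$, with $\beta$ as the block swap — to be the main obstacle; everything else is formal.
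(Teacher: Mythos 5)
Your proof is correct and follows essentially the same route as the paper's: the paper observes that $L_2a$ is not central in $H/L_2$ (equivalent to your claim that $a$ is $H$-conjugate to $a^{-1}$), notes that $H$ is not rational while $S$ is (your ``$S\cong H/K$ is not rational'' contradiction), and concludes via $[N_S(\GEN{a}):C_S(a)]=2$ and $S=HN_S(\GEN{a})$ exactly as you do, merely phrased as a direct argument rather than by contradiction. The two computational inputs you flag for cases \eqref{cased} and \eqref{casee} are likewise taken as unproved observations in the paper, so your proposal matches it in both substance and level of detail.
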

		
\begin{proof}
Observe that $L_2a$ is not central in $H/L_2$, $S=A_2N_S(\GEN{a})$ and $H$ is not rational.
Thus $H\cap N_S(\GEN{a})\not\subseteq C_S(a)$ and $N_S(\GEN{a})\not\subseteq H$.
Since $[N_S(\GEN{a}):C_S(a)]=2$, the lemma follows.
\end{proof}
%

It is clear that $C_S(a)$ has a unique Sylow $2$-subgroup which we denote $C_S(a)_2$.

	\begin{lemma}\label{fact:removeCasea} $H/K$ is not isomorphic to $C_3\rtimes C_4$.
	\end{lemma}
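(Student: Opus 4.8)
The plan is to derive a contradiction from the assumption $H/K \cong C_3 \rtimes C_4$. By \Cref{A=3}, this case is equivalent to $A = \GEN{a}$, meaning the Sylow $3$-subgroup $\GEN{a}$ is normal in $S$. The strategy is to exploit this normality to produce a proper normal subgroup $U$ of $G$ contained in $V$, or else to directly contradict either the eigenvector property, the faithfulness of the action, or the absence of elements of order $15$. The key structural input is \eqref{LK}, giving $L/K \cong C_3$, so $L = K\GEN{a}$ with $K$ a normal $2$-subgroup of $H$ centralizing $W$; since $H/K \cong C_3 \rtimes C_4$ acts on $W = \F_5^2$, the element $a$ acts on $W$ as an order-$3$ matrix in $\GL(2,5)$ with no nonzero fixed vectors (because $G$ has no elements of order $15$, so $w^a \neq w$ for $w \in W \setminus \{1\}$).

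The main steps I would carry out are as follows. First I would use \Cref{Z_not_trivial} in the $C_3 \rtimes C_4$ case, which already gives $Z(S) \neq 1$ via an element of $C_S(a)_2 \cap Z(S_2)$; call such a central element $z$, a $2$-element commuting with $a$. Then I would examine how $z$ acts on $V = \bigoplus_{t \in T} W^t$. Since $z$ is central in $S$, it stabilizes every $W^t$ and acts on each as a scalar-compatible $\F_5$-linear map, and crucially $z$ commutes with $a$. The plan is to show that the combined action of $\GEN{z, a}$ forces an invariant $\F_5$-subspace of $V$ that is normal in $G$ and proper, contradicting minimality of $V$; alternatively, that $z$ acts trivially on $V$ (contradicting faithfulness) or that the eigenvector property fails because a $2$-element cannot supply the eigenvalue $2$ needed on vectors fixed by the $C_3$-action. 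Concretely, on $W$ the subgroup $\GEN{K, a}/K \cong C_3$ is normal in $H/K \cong C_3 \rtimes C_4$, so the $C_4$-quotient acts on $W$ by normalizing the $\GEN{a}$-action; I would analyze the eigenvalues of the order-$4$ generator modulo $K$ to see whether the eigenvector property (every $v$ satisfies $v^s = v^2$ for some $s$) can be met, since the available eigenvalues of a $C_4$ inside $\GL(2,5)$ are constrained.

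The decisive obstacle I anticipate is the eigenvector property. Because $H/K \cong C_3 \rtimes C_4$, the $2$-part of $H$ modulo $K$ is cyclic of order $4$, and a generator acts on $W = \F_5^2$ with eigenvalues among the fourth roots of unity in $\overline{\F_5}$, i.e. in $\{1, 2, 3, 4\}$ since $4 \mid 5 - 1$. The eigenvector property demands that for every $w \in W$ there is $s \in H$ (by \Cref{3elH}) with $w^s = w^2$, i.e. $2$ must arise as an eigenvalue with a full complement of eigenvectors covering all of $W$. I would argue that in $C_3 \rtimes C_4$, the order-$4$ elements act on $W$ with determinant and trace forcing eigenvalues $\{2,3\}$ or similar, and that no single conjugacy-closed set of such elements can realize $w \mapsto w^2$ for every $w$; the $C_3$-elements scale by primitive cube roots of unity ($\neq 2$), and the $C_4$-elements cannot simultaneously double every vector. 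Making this precise — identifying exactly which elements of $H/K$ can send a given $w$ to $2w$ and showing they fail to cover $W$ — is the technical heart, and I expect it to reduce to a short computation in $\GL(2,5)$ with the explicit $C_3 \rtimes C_4$ matrices, ruling out the eigenvector property and thereby dismissing case \eqref{caseb}.
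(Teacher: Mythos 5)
There is a genuine gap, and it lies at what you yourself identify as ``the technical heart'': your decisive obstacle does not exist. The group $C_3\rtimes C_4$ acting faithfully and irreducibly on $W=\F_5^2$ \emph{does} have the eigenvector property, and indeed it must, since \Cref{3elH} derives the eigenvector property of $W_H$ from that of $V_S$. Concretely, $H/K\cong C_3\rtimes C_4$ has exactly six elements of order $4$ and $W$ has exactly six one-dimensional subspaces; each order-$4$ element acts with eigenvalues $2$ and $3$ on $W$ (e.g.\ with $u^b=u^2$ and $v^b=u^3v^3$ in the paper's notation), and the six order-$4$ elements double the six lines in a perfect bijection. So no contradiction can be extracted from the action of $H$ on $W$ alone, and likewise your alternative routes (a proper normal $U<V$, or $z$ acting trivially) cannot succeed from this local data: the configuration on a single component $W$ is perfectly consistent. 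A smaller inaccuracy: the elements of order $3$ do not ``scale by primitive cube roots of unity'' on $W$, since $3\nmid 5-1$; they act fixed-point-freely with eigenvalues in $\F_{25}\setminus\F_5$.

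The paper's actual argument is forced to leave $W$ and work in $V=\bigoplus_{t\in T}W^t$. It first establishes the bijection above (so that $X_w=\{Kh: w^h=w^2\}$ is a \emph{singleton} for each $w\in W\setminus\{1\}$), then invokes \Cref{lem:existd} to produce $d\in C_S(a)_2\cap T$ with $d\notin H$, and applies the eigenvector property of $V_S$ to the element $u\,(u^2v)^d$, which straddles the two components $W$ and $W^d$. The resulting $2$-element $s$ must double $u$ and conjugate $(u^2v)^d$ to its square simultaneously; combining the uniqueness from the bijection with the fact that $[s,d^{-1}]$ lies in the normal subgroup $C_S(a)_2$ and hence acts on $W$ as $\pm1$ yields $u^4v^2\in\{u^2v^3,u^3v^2\}$, a contradiction. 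The essential idea you are missing is this interplay between distinct components of the induced module via an element of $C_S(a)_2$ outside $H$; without it, case \eqref{caseb} cannot be ruled out.
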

	\begin{proof} 
		Suppose that $H/K\cong C_3\rtimes C_4$. 
		Let $u\in W\setminus\{1\}$. By \Cref{3elH}, $H$ contains an element $b$ such that $u^b=u^2$. Then $| Kb|=4$, $H=\GEN{K,a,b}$ and $a^b\in Ka^2$. 
		Define $v=u^a$ and observe that $\{u,v\}$ is a basis of $W$ since conjugation by $a$ has no fixed points in $W\setminus\{1\}$. As $G$ has not elements of order $15$, $au$ has order $3$ and hence $1=(au)^3 = u^{a^2}u^au=v^avu$. Thus $v^a=u^{4}v^{4}$. Moreover, $v^b=u^{ab}=u^{ba^2}=(u^2)^{a^2}=(v^a)^2=u^{3}v^3$.  Thus the $H$-orbit of $u$ in $W$ is $\mathcal{O}_u=\{ u^i, v^i, (uv)^i : i=1,\dots, 4  \}$.
		
		Furthermore, $H/K$ has exactly $6$ elements of order $4$, while $W$ has exactly $6$ subspaces of dimension $1$.
		By the eigenvector property of $W_H$, for every $w\in W\setminus \{1\}$,
		the set $X_{w}=\{Kh\in H/K : w^h=w^2\}$ is not empty.
		In addition, no element of order $4$ of $H/K$ is central and hence $X_{u}\cap X_{v}=\emptyset$ whenever $W=\GEN{u,v}$.
		Therefore, $|X_w|=1$ for every $w\in W\setminus \{1\}$ and hence there is a bijection from the set of one-dimensional subspaces of $W$ to the set of elements of order $4$ of $H/K$, associating $\GEN{w}$ with the unique element of $X_w$.
		
By \Cref{lem:existd}, we may assume that $T\setminus \{1\}$ contains an element $d$ in $C_S(a)_2$.
		By the eigenvector property of $V_S$, there is a $2$-element $s\in S$ such that $(u (u^2v)^d)^s=(u (u^2v)^d)^2$. If $u^s=((u^2v)^d)^2$ then $u^{sd^{-1}}=u^4v^2\in W $, so $sd^{-1}\in H$ and it follows that $u^4 v^2\in \mathcal{O}_u$, a contradiction.
		Hence $u^s=u^2$ and $(u^2v)^{ds}= ((u^2v)^d)^2= (u^4v^2)^d$. 
		Thus $s,dsd^{-1}\in H$, $X_u=\{Kb\}=\{Ks\}$ and $X_{u^2v}=\{Kdsd^{-1}\}$.

		Notice that $C_S(a)$, and therefore $C_S(a)_2$, are both normal in $S$ by \Cref{A=3}. As $d\in C_S(a)_2$ also $[s,d^{-1}]\in C_S(a)_2\cap H$. Thus $K[s,d^{-1}] \in KC_H(a)_2/K\subseteq C_{H/K}(Ka)_2=\GEN{Kb^2}$.
		Therefore $w^{[s,d^{-1}]}\in \{w,w^{-1}\}$ for every $w\in W$. 
		Moreover, $s=kb$ for some $k\in K$ and $ u^4v^2=(u^2v)^{dsd^{-1}}  =(u^2v)^{s [s,d^{-1}]} =(u^2v)^{kb[s,d^{-1}]} =(u^2v^3)^{[s,d^{-1}]}\in \{ u^2v^3, u^3 v^2 \}$, the desired contradiction. 
	\end{proof} 
	
	In the latter proof we used that $C_S(a)_2$ was normal to dismiss case \eqref{caseb}. In the proof for cases \eqref{cased} and \eqref{casee} we will use the same idea with the normal subgroup $A_2C_S(a)_2$ instead of $C_S(a)_2$.  
	
	\begin{lemma}\label{Casosde}
		$H/K$ is neither isomorphic to  $\GEN{\alpha,\beta,\gamma}$ nor $\GEN {\alpha,\beta,\gamma^2}$.
	\end{lemma}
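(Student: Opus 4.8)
The plan is to exploit the very explicit $4$-dimensional module structures in cases \eqref{cased} and \eqref{casee}, together with the machinery already assembled, in exactly the way the authors signalled: replacing the normal subgroup $C_S(a)_2$ used in \Cref{fact:removeCasea} by the larger normal subgroup $A_2C_S(a)_2$. First I would record the relevant structural facts for $H/K\cong\GEN{\alpha,\beta,\gamma}$ or $\GEN{\alpha,\beta,\gamma^2}$: by \eqref{LK} we have $L/K\cong\SL(2,3)$, and \Cref{Z_not_trivial} gives a central involution $z\in Z(S)\cap A_2$. I would fix $u\in W\setminus\{1\}$ and, using \Cref{3elH}, pick $b\in H$ with $u^b=u^2$, then compute the $H$-orbit $\mathcal{O}_u$ of $u$ explicitly inside $W=\F_5^4$ using the given matrices $\alpha,\beta,\gamma$. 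The key combinatorial input, as in the $C_3\rtimes C_4$ case, is to count the elements $h$ of $H/K$ realising $w^h=w^2$ for each nonzero $w$ and to match these against the one-dimensional subspaces of $W$; here $\dim W=4$ so there are $156$ one-dimensional subspaces, and one must understand the eigenvector sets $X_w=\{Kh: w^h=w^2\}$ carefully rather than getting the clean bijection of the two-dimensional case.

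Next I would invoke \Cref{lem:existd}, which applies precisely to cases \eqref{caseb}, \eqref{cased}, \eqref{casee}, to conclude that $C_S(a)\not\subseteq H$, and hence (as in \Cref{fact:removeCasea}) choose an element $d\in T\setminus\{1\}$ lying in $C_S(a)_2$. The strategy is then to locate a carefully chosen vector $w_0\in W$ (an analogue of $u(u^2v)^d$, but now involving the four-dimensional data and the involution $z$) and apply the eigenvector property of $V_S$ to get a $2$-element $s\in S$ with $w_0^s=w_0^2$; by the induction-from-$W$ bookkeeping, the component equations force both $s$ and $dsd^{-1}$ into $H$, pinning down $X$-sets for specific vectors. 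The decisive step is the commutator estimate: since $A_2C_S(a)_2$ is normal in $S$ (using \Cref{A=3} for the normality of $C_S(a)_2$ and the normality of $A_2$ already established), the commutator $[s,d^{-1}]$ lands in $A_2C_S(a)_2\cap H$, whose image in $H/K$ I would show lies in a small, controllable subgroup $-$ essentially $K$-multiples of $Z(L/K)$ together with the $2$-part of $C_{H/K}(Ka)$. This constrains the action of $[s,d^{-1}]$ on $W$ to a short list of possibilities (inversion or fixing on the relevant lines, modified by the central involution $z$).

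The contradiction should then emerge exactly as before: computing $w_0^{dsd^{-1}}=w_0^{s[s,d^{-1}]}$ two ways gives a vector that must lie in $\mathcal{O}_u$ (or in a specified $X$-set) on one side, yet the constrained $[s,d^{-1}]$-action forces it into a different, disjoint set on the other side. I would carry this out uniformly for both generating sets $\GEN{\alpha,\beta,\gamma}$ and $\GEN{\alpha,\beta,\gamma^2}$, noting where the index-two passage from $\gamma$ to $\gamma^2$ changes the orbit sizes and the count of order-$4$ elements but not the overall shape of the argument.

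I expect the main obstacle to be the bookkeeping in dimension four: in \Cref{fact:removeCasea} the equality $\dim W=2$ made the sets $X_w$ singletons and produced a clean bijection between lines and order-$4$ elements, whereas for $\GEN{\alpha,\beta,\gamma}$ the orbit of $u$ and the fibres $X_w$ are genuinely larger, so isolating a single vector $w_0$ whose eigenvector data is rigid enough to force the final contradiction will require a careful choice, very likely guided by the central involution $z$ from \Cref{Z_not_trivial} and by explicit eigenvalue computations with $\alpha,\beta,\gamma$ over $\F_5$. Verifying that the commutator $[s,d^{-1}]$ really does act by at most inversion on the chosen lines $-$ i.e. that $KC_H(a)_2/K$ is as small as claimed $-$ is the crux, and pinning down $C_{H/K}(Ka)_2$ inside these two explicit $48$- and $96$-element groups is where the real work lies.
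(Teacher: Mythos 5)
Your overall architecture is the paper's: use \Cref{lem:existd} to choose $d\in C_S(a)_2\cap(T\setminus\{1\})$, apply the eigenvector property of $V_S$ to an element with components in $W$ and $W^d$, force $s,dsd^{-1}\in H$, and contradict via $[s,d^{-1}]\in A_2C_S(a)_2\cap H$. But two essential ingredients are missing or misidentified. First, the paper does not work with a single vector and its orbit, nor with any counting over the $156$ lines of $\F_5^4$. It fixes two explicit vectors $u=(0,1,1,1)$ and $v=(0,1,1,2)$ lying in \emph{different} $H/K$-orbits; this is what kills the crossed case $u^s=(v^d)^2$ (which would place $u$ and $v^2$, hence $u$ and $v$, in one orbit). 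For these particular vectors there is a \emph{unique} matrix $\mu_u$ (resp.\ $\mu_v$) in $\GEN{\alpha,\beta,\gamma}$ with $u\cdot\mu_u=2\cdot u$ (resp.\ $v\cdot\mu_v=2\cdot v$); this uniqueness is a {\sf GAP} verification, not a consequence of any bijection between lines and order-$4$ elements as in \Cref{fact:removeCasea}. You correctly note that the fibres $X_w$ need not be singletons in dimension four, but you leave unresolved how to recover the rigidity you need; without it the identifications $Ks=\mu_u$ and $Kdsd^{-1}=\mu_v$, on which everything downstream depends, do not follow.

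Second, your constraint on $[s,d^{-1}]$ is too strong and in fact false. Its image in $H/K$ lies in $(L/K)_2\,C_{H/K}(Ka)_2$, where $(L/K)_2=KA_2/K$ is the full quaternion Sylow $2$-subgroup of $L/K\cong\SL(2,3)$ (order $8$), not merely $Z(L/K)$ times $C_{H/K}(Ka)_2$ as your sketch suggests. Elements of this $Q_8$ do not act on lines of $W=\F_5^4$ by at most inversion, so the short list of possibilities you want for the action of $[s,d^{-1}]$ does not exist, and the orbit-disjointness contradiction you envisage would not go through. The paper instead closes the argument with a single explicit non-membership, $\mu_u^{-1}\mu_v\notin(L/K)_2C_{H/K}(\alpha)_2$ (see \eqref{MuMv}), again checked by computer. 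In short, the skeleton is right, but the two computational anchors --- the choice of $u,v$ in distinct orbits with unique eigen-matrices, and the non-membership of $\mu_u^{-1}\mu_v$ --- are where the proof actually lives, and your proposal does not supply workable substitutes for them.
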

	\begin{proof}
	Suppose that $H/K$ is isomorphic to one of the groups in the statement, acting on $W=\mathbb{F}_5^4$ as in \Cref{Heg4.9}.
	To simplify notation we identify $H/K$ with the corresponding group of matrices. We start with a series of basic facts about these groups and their actions that follow from straightforward computations on {\sf GAP} \cite{GAP4}. The subgroup of $H/K$ generated by the $3$-elements of $H/K$ and its Sylow $2$-subgroup are
		$$L/K= \GEN{\alpha , \gamma ^{-1}\alpha \gamma} \qand (L/K)_2=\GEN{\alpha \gamma^{-1} \alpha \gamma, \gamma^{-1} \alpha \gamma \alpha},$$ 
	respectively. Moreover
        $$C_{H/K}(\alpha)_2=\begin{cases} \GEN{\alpha \gamma \beta \alpha, \alpha \beta \gamma (\alpha \beta)^2},& \text{if }  H/K \text{ is } \GEN{\alpha, \beta, \gamma}; \\ \GEN{\alpha \gamma \beta \alpha^2 \beta \gamma (\alpha \beta)^2},&  \text{if }  H/K \text{ is } \GEN{\alpha, \beta, \gamma^2}, \end{cases}$$
 and $C_{H/K}(\alpha)_2$ is normal in $H/K$.
		The elements 
		$$u=(0,1,1,1)\qand v=(0,1,1,2)$$ 
		of $W$ lie in two different $H/K$-orbits. Moreover, the matrices $$ \mu_u=\left(\begin{matrix}
			0&0&3&2 \\
			0&0&2&2 \\
			4&1&0&0 \\
			1&1&0&0
		\end{matrix} \right) \qand \mu_v= \left(\begin{matrix}
			0&0&4&2 \\
			0&0&2&4 \\
			3&1&0&0 \\
			1&3&0&0
		\end{matrix}\right)$$
		belong to $\GEN{\alpha ,\beta ,\gamma^2}$ and they are the unique matrices in $\GEN{\alpha ,\beta,\gamma}$ such that $u\cdot \mu _u=2\cdot u$ and $v\cdot \mu_v=2\cdot v$.
		Here we are using additive notation.
		Moreover 
		\begin{equation}\label{MuMv}
			\mu_u^{-1}\mu_v=\left(\begin{matrix} 
				4&1&0&0 \\
				2&2&0&0  \\
				0&0&2&3 \\
				0&0&4&4 
			\end{matrix}\right) \notin (L/K)_2C_{H/K}(\alpha)_2.
		\end{equation}
		
Now we put together all this information to find a contradiction. Since $C_{H/K}(\alpha)_2$ is normal in $H/K$ and $\GEN{Ka}$ and $\GEN{\alpha}$ are conjugate in $H/K$, we derive that $C_{H/K}(\alpha)_2=C_{H/K}(Ka)_2$.

By \Cref{lem:existd}, we may assume that $T\setminus \{1\}$ contains an element $d$ in $C_S(a)_2$.
         In particular, $d\in A_2C_S(a)_2$.
		By rationality, there is an element $s\in S$ such that $(uv^d)^s=(uv^d)^2$. 
		If $u^s=(v^d)^2$ and $v^{ds}= u^2$ then $sd^{-1}\in H$, so $u$ and $v^2$ lie in the same $H/K$-orbit and hence so do $u$ and $v$, yielding a contradiction. 
		Then $u^s=u^2$ and $v^{ds}=(v^d)^2$. Thus $v^{dsd^{-1}}=v^2$ and $s, dsd^{-1} \in H$. Therefore by uniqueness,
		$$\mu_u=Ks \qand \mu_v=Kdsd^{-1}= Ks[s,d^{-1}]= \mu_u \cdot K[s, d^{-1}].$$
		As $A_2C_S(a)_2$ is normal in $S$ and $d\in A_2C_S(a)_2$, we have $[s,d^{-1}]\in A_2C_S(a)_2\cap H=A_2(C_S(a)_2\cap H)=A_2C_H(a)_2$ and hence $\mu_u^{-1}\mu_v=K[s,d^{-1}]\in (L/K)_2C_{H/K}(Ka)_2$, in contradiction with \eqref{MuMv}.
	\end{proof}
	
\underline{Thus necessarily $H/K\cong \SL(2,3)$ and $W=\mathbb{F}_5^2$.}
This case appears to be different from the previous ones, as in those our strategy consisted in finding elements $u,v\in W\setminus\{1\}$ lying in different $H$-orbits, and then exploiting this fact and that the element $uv^d$ is rational to find a contradiction, while, in the present setting, $H$ acts transitively on $W\setminus \{1\}$. This situation requires a new strategy, this time exploiting that the action of $H_2$ on $W\setminus \{1\}$ has three different orbits, which we use to construct an element $\mathbf{v}\in V$ in a  slightly less straightforward manner.
Then, by rationality there is a $2$-element $s\in S$ such that $\mathbf{v}^s=\mathbf{v}^2$.
We conclude the proof using $s$ to construct a non-trivial element in $C_S(a)\cap \left( \bigcap_{t\in T_0} K^t\right)$, contradicting \Cref{notClaimOmega} below.

    We first recall a series of facts: $H=L=KA$, and, by \Cref{Z_not_trivial}, $Z(S)\cap A_2\cong C_2$. We define $z$ to be the non-trivial central element of $S$ contained in $A_2$.
By \Cref{Kcorefree}, $z\not\in K$  and hence $Kz$ is the unique element of order $2$ of $H/K$.
    
As $S$ is rational, there exists a $2$-element $b\in N_S(\GEN{a})$ such that $a^b=a^2$. Moreover, $b\not \in H$, since $C_{H/K}(Ka)=N_{H/K}(Ka)$.
Then $S=AC_S(a)\GEN{b}$.
We define $N:=AC_S(a)$, which has index $2$ in $S$.
Clearly, $N$ and its unique Sylow $2$-subgroup $N_2$ are normal in $S$, and we can assume that $S_2=N_2\GEN{b}$.
Notice that a transversal $T_0$ of $H_2$ in $N_2$ is also a transversal of $H$ in $N$, so we can assume that $T=T_0 \cup T_0b$, where $T_0$ contains $1$ and is contained in $C_S(a)_2$. Let $V_0:=\bigoplus_{t\in T_0} W^t$.
Then $V=V_0\oplus V_0^b$ and, in the action of $S$ on $V$, the elements of $N$ stabilize $V_0$ and $V_0^b$, while the elements of $S\setminus N$ interchange $V_0$ and $V_0^b$.
 
 \begin{lemma}\label{notClaimOmega}
	$ C_S(a)\cap \left( \bigcap_{t\in T_0} K^t\right)=1$.
\end{lemma}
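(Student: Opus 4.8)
The plan is to identify $\bigcap_{t\in T_0}K^t$ with the pointwise stabiliser in $N$ of $V_0$, and then to show that any element of $C_S(a)$ lying in this stabiliser must also stabilise $V_0^b$ pointwise; as $N$ acts faithfully on $V=V_0\oplus V_0^b$, such an element is forced to be trivial. First I would observe that $K\subseteq N$: every element of $K$ fixes $W=W^1\subseteq V_0$ pointwise, so it cannot interchange $V_0$ and $V_0^b$ and hence lies in $N$. Since $A\subseteq N$ as well, we get $H=KA\subseteq N$; and because $T_0\subseteq C_S(a)\subseteq N$ and $N\lhd S$, each $K^t$ with $t\in T_0$ is contained in $N$. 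For $g\in N$ one has $g\in K^t$ precisely when $g$ fixes $W^t$ pointwise, so $g\in\bigcap_{t\in T_0}K^t$ exactly when $g$ fixes $V_0=\bigoplus_{t\in T_0}W^t$ pointwise; thus $\bigcap_{t\in T_0}K^t=C_N(V_0)$. Conjugating by $b$ and using $N^b=N$ gives $\bigcap_{t\in T_0}K^{tb}=C_N(V_0^b)$, and since $T=T_0\cup T_0b$, \Cref{Kcorefree} yields $C_N(V_0)\cap C_N(V_0^b)=\bigcap_{t\in T}K^t=\Core_S(K)=1$, which is just the faithfulness of the action of $N$ on $V$.

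With this in place the statement reduces to a single assertion: \emph{if $x\in C_S(a)$ fixes $V_0$ pointwise, then $x$ fixes $V_0^b$ pointwise.} Indeed, granting it, any $x\in C_S(a)\cap\bigcap_{t\in T_0}K^t$ lies in $C_N(V_0)\cap C_N(V_0^b)=1$. To attack the reduced assertion I would exploit the feature distinguishing the present case, namely that $\GEN{a}$ acts fixed-point-freely on $V$ because $G$ has no element of order $15$. Restricting to the $N$-submodule $V_0^b$, the polynomial $t^2+t+1$ is irreducible over $\F_5$, so $V_0^b$ has no trivial $\F_5\GEN{a}$-constituent and $a$ makes $V_0^b$ into a vector space over $\F_{25}=\F_5[a]$; consequently $x|_{V_0^b}$, which commutes with $a$, is an $\F_{25}$-linear $2$-element. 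The element $b$ inverts $a$ and hence acts on this $\F_{25}$-structure through the Frobenius automorphism, while the central element $z$ of \Cref{Z_not_trivial} acts as $-1$ on all of $V$. The plan is to combine these with the eigenvector property of $V_S$—testing it on a vector built from $W^1$ and its $b$-translate—to contradict $x|_{V_0^b}\neq 1$.

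The main obstacle is exactly this last step. The two conditions ``$x$ fixes $V_0$'' and ``$x$ fixes $V_0^b$'' are interchanged by conjugation by $b$, so the faithfulness of the action, being symmetric in $V_0$ and $V_0^b$, cannot by itself separate them: any manipulation using only $\Core_S(K)=1$ and the normality of $N$ runs into this symmetry and stalls (one merely re-derives $x\in C_N(V_0^b)^{\,b}$, which is automatic). Breaking the symmetry genuinely requires the fixed-point-free $3$-element $a$ together with the rationality constraints, and this is the delicate, case-specific computation that the earlier remark flags as an ad hoc argument; I expect the crux to be producing the right test vector in $V_0^b$ whose behaviour under the eigenvector-property element is incompatible with a non-trivial $x|_{V_0^b}$.
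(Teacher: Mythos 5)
Your reduction is correct as far as it goes: identifying $\bigcap_{t\in T_0}K^t$ with the pointwise stabiliser of $V_0$ (via $\Stab_G(W^t)=H^t$) and observing that $C_G(V_0)\cap C_G(V_0^b)=\bigcap_{t\in T}K^t=1$ is a faithful restatement of \Cref{Kcorefree}. But after this reformulation the entire content of the lemma sits in the assertion you yourself flag as the ``main obstacle'': that an element of $C_S(a)$ fixing $V_0$ pointwise must also fix $V_0^b$ pointwise. You do not prove it. The $\F_{25}$-structure on $V_0^b$, the Frobenius action of $b$, and the unspecified ``right test vector'' constitute a plan rather than an argument; nothing in the proposal pins down why a non-trivial $x|_{V_0^b}$ would be incompatible with the eigenvector property, and it is not clear that a vector-based argument of this shape closes. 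So there is a genuine gap: the lemma has been restated, not proved.

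The mechanism that actually breaks the $V_0$/$V_0^b$ symmetry does not pass through vectors in $V$ at all. Take $c\in C_S(a)\cap\bigl(\bigcap_{t\in T_0}K^t\bigr)$; it is a $2$-element (each $K^t$ is a $2$-group) commuting with $a$, so $\GEN{ca}=\GEN{c}\times\GEN{a}$ and $ca^2$ is another generator of this cyclic group. Rationality therefore supplies a $2$-element $s\in S$ with $c^s=c$ and $a^s=a^2$. No element of $N=AC_S(a)$ inverts $a$, so $s\in Nb$; and $\bigcap_{t\in T_0}K^t=\Core_N(K)$ is normal in $N$, so conjugation by $s$ carries it to $\bigcap_{t\in T_0}K^{tb}$. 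Hence $c=c^s\in\bigcap_{t\in T}K^t=1$ by \Cref{Kcorefree}. In short, the symmetry you correctly identify as the obstruction is broken not by the module structure of $V_0^b$ but by applying rationality to the mixed-order element $ca$ of $S$ itself, which forces the centralising conjugator out of $N$.
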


\begin{proof}
Let $c \in C_S(a)\cap \left( \bigcap_{t\in T_0} K^t\right)$. By rationality, there is some $2$-element $s\in S$ such that $c^s=c$ and $a^s=a^2$. Since no element $s$ in $N$ satisfies the second condition, we derive that $s=gb$ for some $g\in N$. Since $K$ is normal in $H$, $\Core_N(K) = \bigcap_{t\in T_0} K^t$ is normal in $N$. Thus
	$$c=c^s\in  \left(\displaystyle\bigcap_{t\in T_0} K^t \right)^s=\displaystyle\bigcap_{t\in T_0 } K^{tb}.$$
	Then $c\in \bigcap_{t\in T} K^t $, so $c=1$ by  \Cref{Kcorefree}.
\end{proof} 

As a consequence, we obtain the version in this case of \Cref{lem:existd}:
\begin{lemma}\label{lema:existed(ificil)}
   $|T_0|>1$. Equivalently, $C_S(a)\not \subseteq H$.  
\end{lemma}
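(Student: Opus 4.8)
The plan is to argue by contradiction: assume $C_S(a)\subseteq H$, equivalently $|T_0|=1$. Then $T_0=\{1\}$, so $\bigcap_{t\in T_0}K^t=K$ and \Cref{notClaimOmega} reads $C_S(a)\cap K=1$; my goal is to contradict this by producing a nontrivial element of $C_S(a)\cap K$. First I would record the structural consequences of the assumption: $N=AC_S(a)=H$, so $H=N$ is normal of index $2$ in $S$, $S=H\GEN b$, and $V=V_0\oplus V_0^b=W\oplus W^b$ with $b$ interchanging the two summands (since $b^2\in C_S(a)\subseteq H$ stabilizes $W$) and $b^2\in C_S(a)$. Since $z\in Z(S)$ and $Kz$ is the central involution of $H/K\cong\SL(2,3)$, the element $z$ acts as $-1$ on all of $V$. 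I would also note that there is no purely structural obstruction here (coprime action of $\GEN a$ on $K$ is consistent with $C_S(a)\cap K=1$), so the eigenvector property of $V_S$, rather than the quotient $H/K$ alone, must be brought in.

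The construction is a version, internal to $W\oplus W^b$, of the rationality arguments of \Cref{fact:removeCasea} and \Cref{Casosde}, adapted to the degenerate case $|T_0|=1$ in which no element $d\in T_0\setminus\{1\}$ is available. I would use two features of the action of $H/K\cong\SL(2,3)$ on $W=\F_5^2$: it is sharply transitive on $W\setminus\{1\}$, so for every $w$ there is a unique element of $H/K$ sending $w$ to $w^2$; and its Sylow $2$-subgroup $H_2/K\cong Q_8$ has exactly three orbits on $W\setminus\{1\}$. Choosing $u,u'\in W\setminus\{1\}$ in suitably chosen $H_2$-orbits, I would build an element $\mathbf v\in W\oplus W^b$ out of $u$, $u'$ and the swap $b$, and apply the eigenvector property of $V_S$ to obtain a $2$-element $s$ with $\mathbf v^s=\mathbf v^2$. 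Decomposing this identity into its $W$- and $W^b$-components and using that $b$ interchanges the summands, sharp transitivity pins down, modulo $K$, both $s$ and its $b$-conjugate on each summand, while the choice of orbits excludes the alternative in which $s$ swaps the two summands.

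From this data I would extract a commutator of $s$ with $b$ lying in $K$, and then force it into $C_S(a)$ using the normal subgroup $A_2C_S(a)_2\trianglelefteq S$ together with a $2$-part/$3$-part argument as in \Cref{Z_not_trivial}; the resulting nontrivial element of $C_S(a)\cap K$ contradicts \Cref{notClaimOmega}, whence $C_S(a)\not\subseteq H$ and $|T_0|>1$. The main obstacle — and the reason this case is genuinely ad hoc — is precisely this last step: unlike in \Cref{fact:removeCasea} and \Cref{Casosde}, there is no a priori nontrivial $d\in C_S(a)_2\cap(T\setminus\{1\})$ to commute against, so the element centralizing $a$ must be produced from $b$ and the eigenvector data alone. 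Carrying this out demands a careful analysis of the three $Q_8$-orbits on $W\setminus\{1\}$ and of the unique eigenvalue-$2$ elements of $\SL(2,3)\le\SL(2,5)$, so as to guarantee that the element finally obtained is at once nontrivial, contained in $K$, and centralizing $a$.
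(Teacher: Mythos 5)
Your proposal is a plan rather than a proof, and it has two genuine gaps. First, your strategy is to contradict \Cref{notClaimOmega} (which under $|T_0|=1$ reads $C_S(a)\cap K=1$) by producing a nontrivial element of $C_S(a)\cap K$. This is impossible when $K=1$, and nothing in your setup rules that case out. The paper has to treat $K=1$ separately: there $S$ would be a rational group of order $48$ with a normal subgroup isomorphic to $Q_8$ and a non-normal Sylow $3$-subgroup, and a {\sf GAP} computation shows no such group exists. Your proposal has no substitute for this step, so even if the rest were carried out it would not close the case.

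Second, for $K\neq 1$ the entire technical core of your plan --- choosing $u,u'$ in suitable $H_2$-orbits, building $\mathbf v$, pinning down $s$ by sharp transitivity, extracting a commutator with $b$, and forcing it into $C_S(a)$ via a $2$-part/$3$-part argument --- is left unexecuted, and you yourself flag the last step as the main unresolved obstacle. Note that $b\notin C_S(a)_2$ (indeed $a^b=a^2$), so the mechanism of \Cref{fact:removeCasea} and \Cref{Casosde}, which relied on commuting $s$ against an element of a normal subgroup contained in $C_S(a)_2$ (or $A_2C_S(a)_2$), does not transfer; making ``the element centralizing $a$'' appear from $b$ and the eigenvector data is exactly what the long $\alpha_n,\beta_n$ computation at the end of the paper is designed to do, and it is not a routine adaptation. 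Moreover, your premise that ``the eigenvector property of $V_S$ \ldots must be brought in'' is false here: the paper's argument for $K\neq 1$ is purely structural. Take any $1\ne k\in K$; then $k^b\in H\setminus K$ by \Cref{Kcorefree}, and after replacing $k$ by $k^2$ if necessary one gets $Kk^b=Kz$ with $z$ central. Hence $k$ acts on $W$ trivially and on $W^b$ as $\pm 1$, so $[k,a]$ acts trivially on $V=W\oplus W^b$ and faithfulness gives $k\in C_S(a)\cap K\setminus\{1\}$, the desired contradiction --- no appeal to rationality of $V$ is needed at this point.
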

\begin{proof}
    Suppose that $|T_0|=1$. Then $N=H$ and $[S:H]=2$.
If in addition $K=1$, then $S$ is a rational group of order $48$ which has a normal subgroup isomorphic to $Q_8$ but has not a normal Sylow
$3$-subgroup. A straightforward {\sf GAP} \cite{GAP4} computation shows that such group does not exists. Therefore there is some $k\in K$ with $k\neq 1$. Then $k^b\in H\setminus K$, since otherwise $k\in K\cap K^b=1$, by \Cref{Kcorefree}.   Moreover we may assume that $Kk^b$ has order  $2$, just by substituting $k$ by $k^2$ if $Kk^b$ has order $4$. Thus $Kk^b=Kz$.
This implies that $[k,a]$ acts trivially on $W$ and $W^b$, and hence on $V$, so by faithfulness, $k$ commutes with $a$. This contradicts \Cref{notClaimOmega}.
\end{proof}

 Next we obtain some extra information on the structure of $H$.  \begin{lemma}\label{lema:Q8diagonal}
	There is an element $\ii\in A_2$ such that: 
	\begin{enumerate} 
		\item \label{lema:Q8diagonalc} $a\ii$ has order $3$ and $\ii^{a^2}\ii^a\ii=1$.		
		\item\label{lema:Q8diagonald} $\ii^t=\ii$ for each $t\in T_0$ and $\ii^{b}=\ii^a\ii=\ii^{b^{-1}}$. 
		\item\label{lema:Q8diagonalf} $\ii^2=z$. 
		\item\label{lema:Q8diagonala} $\GEN{\ii,\ii^a}\cong Q_8$ and  $\GEN{\ii,a} \cong \SL(2,3)$.
		\item \label{lema:Q8diagonalb}$H_2=K\rtimes \GEN{\ii,\ii^a}$ and $H=K\rtimes \GEN{\ii,a}$.
		\item\label{lema:Q8diagonale} $H^t= K^t \rtimes \GEN{\ii,\ii^a}$ for each $t\in T$.

	\end{enumerate}
\end{lemma}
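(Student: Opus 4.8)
The plan is to manufacture a single element $\ii\in A_2$ and reduce the whole lemma to two conditions on it: that $\ii^{a^2}\ii^a\ii=1$ (item \ref{lema:Q8diagonalc}) and that $\ii^2=z$ (item \ref{lema:Q8diagonalf}). Everything structural will then be a formal consequence. Throughout I use that $H=KA$ with $A=A_2\rtimes\GEN a$, that $H/K\cong\SL(2,3)=Q_8\rtimes\GEN{\overline a}$ with $\overline a$ cycling the three order-four cyclic subgroups of $Q_8$, and that $\GEN a$ has order $3$ and so acts coprimely on the $2$-group $A_2$. I also keep from \Cref{Z_not_trivial} that $Z(S)\cap A_2=\GEN z\cong C_2$, and from \Cref{Kcorefree} that $z\notin K$, so that $Kz$ is the unique involution of $H/K$.

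I would first record the \emph{engine}: the two conditions alone yield items \ref{lema:Q8diagonala} and \ref{lema:Q8diagonalb}. Indeed, $\ii^2=z\neq 1$ with $z^2=1$ forces $\ii$ to have order $4$, so $\ii^{-1}=\ii z$, and $(\ii^a)^2=(\ii^a)^{a}\!{}^{-1}\!=z^a=z$ since $z$ is central. From $\ii^{a^2}\ii^a\ii=1$ one then gets $\ii^{a^2}=\ii^{-1}(\ii^a)^{-1}=\ii\ii^a$ and, applying $a$, the relation $\ii\ii^a=z\,\ii^a\ii$. Hence $\GEN{\ii,\ii^a}$ is generated by two order-four elements with common central square $z$ that anticommute up to $z$, so $\GEN{\ii,\ii^a}\cong Q_8$ and $\GEN{\ii,a}\cong\SL(2,3)$. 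As $\GEN{\ii,\ii^a}$ has order $8$ and maps isomorphically onto the Sylow $2$-subgroup $Q_8$ of $H/K$, it meets $K$ trivially; therefore $H_2=KA_2=K\rtimes\GEN{\ii,\ii^a}$ and $H=K\rtimes\GEN{\ii,a}$, which is item \ref{lema:Q8diagonalb}. This part is rigid and reliable.

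The real work is to produce $\ii$ satisfying the two conditions and fixed by $T_0$. For \ref{lema:Q8diagonalc} I would start from any $\ii_0\in A_2$ with $K\ii_0$ an order-four element chosen so that $\overline{a\ii_0}$ has order $3$ (possible since $A_2\twoheadrightarrow Q_8$ and, in $\SL(2,3)$, the product of $\overline a$ with a suitable order-four element has order $3$). Then $g:=a\ii_0$ maps to an order-three element, so its $2$-part generates $\GEN{g^3}\subseteq A_2\cap K$, and replacing $\ii_0$ by $\ii_0$ times that $2$-part turns $a\ii_0$ into a genuine element of order $3$, i.e. gives $\ii^{a^2}\ii^a\ii=1$. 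The hard step is to upgrade $\ii$ so that in addition $\ii^2=z$: a priori $\ii^2\in z(A_2\cap K)$, and one must adjust $\ii$ inside $A_2\cap K$ without destroying relation \ref{lema:Q8diagonalc}. I expect this to be the main obstacle, and would resolve it by exploiting the coprime $\GEN a$-action on $A_2$ together with the centrality of $z$, forcing the unique involution of the resulting $Q_8$ to be the distinguished central element $z$. For the $T_0$-part of item \ref{lema:Q8diagonald} I would use that $C_S(a)_2$ acts trivially on $A_2/(A_2\cap K)\cong Q_8$ — an automorphism of $Q_8$ commuting with the $3$-cycle $\overline a$ is a power of $\overline a$, which cannot have even order for the image of a non-trivial $2$-element — so that each $t\in T_0\subseteq C_S(a)_2$ sends $\ii$ to another solution of the two conditions over the same class; checking that such a solution is unique (equivalently, realising it concretely through the action on the decomposition $V=V_0\oplus V_0^b$, on which every $t\in T_0$ preserves $V_0$) then gives $\ii^t=\ii$. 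The failure of coprimality for the $C_S(a)_2$-action is exactly what makes this delicate.

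It then remains to derive the second half of \ref{lema:Q8diagonald} and item \ref{lema:Q8diagonale}. Since $a^b=a^2$, we have $a^{b^2}=a$, so $b^2\in C_S(a)_2$ fixes $\ii$ by the previous step, giving $\ii^b=\ii^{b^{-1}}$; the explicit value $\ii^b=\ii^a\ii$ is then pinned down by the induced action of $b$ on $Q_8$ together with $a^b=a^{-1}$ and the relations $\ii^{a^2}=\ii\ii^a$, $\ii\ii^a=z\,\ii^a\ii$ established in the engine. Finally item \ref{lema:Q8diagonale} follows by conjugating the splitting in \ref{lema:Q8diagonalb} by each $t\in T=T_0\cup T_0b$: items \ref{lema:Q8diagonald} give $\ii^t,(\ii^a)^t\in\GEN{\ii,\ii^a}$ for every such $t$, so $\GEN{\ii,\ii^a}$ (and likewise $\GEN{\ii,a}$) is $T$-invariant, and the asserted decomposition of $H^t$ with the same complement drops out.
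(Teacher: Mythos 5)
Your ``engine'' is sound and coincides with the paper's closing step: once one has an $\ii\in A_2$ with $a\ii$ of order $3$, $\ii^{a^2}\ii^a\ii=1$ and $\ii^2=z$ central, the relations force $\GEN{\ii,\ii^a}\cong Q_8$, $\GEN{\ii,a}\cong\SL(2,3)$, and the splittings of $H_2$ and $H$ over $K$. But the substance of the lemma is the construction of $\ii$ together with items \eqref{lema:Q8diagonald} and \eqref{lema:Q8diagonalf}, and there your proposal has two genuine gaps. First, your route to $\ii^t=\ii$ for $t\in T_0$ rests on a ``trivial action of $C_S(a)_2$ on $A_2/(A_2\cap K)$'', but no such action exists: elements of $T_0$ do not normalize $K$ (indeed $\bigcap_{t\in T}K^t=1$ by \Cref{Kcorefree} while $K$ may be non-trivial), so $A_2\cap K$ is not $T_0$-invariant and the quotient is not a $C_S(a)_2$-set; even granting it, you would only get $\ii^t\equiv\ii$ modulo $A_2\cap K$, and the uniqueness claim needed to upgrade this to equality is asserted, not proved. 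The same objection applies to ``the induced action of $b$ on $Q_8$'' used to pin down $\ii^b=\ii^a\ii$, since $b\notin H$ does not normalize $K$ either. Second, you explicitly leave the step $\ii^2=z$ as a hope to be resolved ``by the coprime $\GEN{a}$-action and centrality of $z$''; this cannot work locally inside $A_2$. In the paper, $\ii^2=z$ is \emph{not} independent of item \eqref{lema:Q8diagonald}: one writes $\ii^2=kz$ with $k\in K$ and then uses $\ii^t=\ii$ for $t\in T_0$ and $\ii^{tb}=\ii^a\ii=(\ii^{-1})^{a^2}$ to place $k$ in $\bigcap_{t\in T}K^t=1$. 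So the global core-freeness of $K$ in $S$, applied through the transformation rules of \eqref{lema:Q8diagonald}, is indispensable, and your proposed order of deduction (get \eqref{lema:Q8diagonalf} first, then \eqref{lema:Q8diagonald}) reverses the actual logical dependence.

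The single idea you are missing, and which delivers \eqref{lema:Q8diagonalc} and \eqref{lema:Q8diagonald} simultaneously, is the paper's fixed-point argument: the $2$-group $N_{S_2}(\GEN{a})$ acts by conjugation on the set of Sylow $3$-subgroups of $A$, whose cardinality is a power of $2$ greater than $1$; since $\GEN{a}$ is a fixed point there must be a second fixed point $\GEN{a\ii}$ with $1\neq\ii\in A_2$. Then $a\ii$ has order $3$ (after the reduction mod $A_2$ you also use), and for $t\in T_0\cup\{b\}\subseteq N_{S_2}(\GEN{a})$ the element $(a\ii)^t$ lies in $\GEN{a\ii}$ and is determined by its image modulo $A_2$, giving $\ii^t=\ii$ and $\ii^b=\ii^a\ii$ at once, with no need to act on any quotient of $A_2$. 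Without this (or an equivalent device), the proposal does not close.
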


\begin{proof}
Consider $N_{S_2}(\GEN{a})$ acting by conjugation on the set $\mathcal{S}_3(A)$ of Sylow $3$-subgroups of $A$.
By Sylow theorem, the cardinality of $\mathcal S_3(A)$ is a power of $2$, which is different from $1$, because $\GEN{Ka}$ is not normal in $H/K$ and hence $\GEN{a}$ is not normal in $S$.
Since the group acting is a $2$-group and $\GEN{a}$ is a fixed point of this action,
there must be another fixed point, say $\GEN{a\ii}$ for some $1\neq \ii\in A_2$.
Then $\ii^{a^2}\ii^a\ii=1$ because $a\ii$ has order $3$ so \eqref{lema:Q8diagonalc} is proved.
 

Moreover, working modulo $A_2$ it is clear that $(a\ii)^t=a \ii$ for every $t\in T_0$ and  $(a\ii)^b=(a\ii)^2=a^2 \ii^a \ii$.
Thus $\ii^t=\ii$ for each $t\in T_0$ and $\ii^b=\ii^a\ii=(\ii^{-1})^{a^2}$, so $\ii^{b^{-1}}=\ii^b$, since $b^2\in C_{S}(\ii)$.
This  proves  \eqref{lema:Q8diagonald}.  

If $\ii\in K$, then $\ii=\ii^t\in K^t$ and $\ii=(\ii^a\ii)^{tb}\in K^{tb}$ for every $t\in T_0$, so that $1\ne \ii\in \bigcap_{t\in T}K^t =1$, a contradiction. Thus $\ii\not\in K$. Moreover $\ii^2\notin K$, since otherwise $ K\ii$ is an element of order $2$ in $H_2/K$, so $\ii=kz$ for some $k\in K$.
Then $ 1=\ii^{a^2} \ii^a \ii =k^{a^2}k^a kz \in Kz$, a contradiction. 
Therefore $K\ii$ has order $4$. 
Thus $K\ii^2$ has order $2$, and hence $\ii^2=kz$ for some $k\in K$. 
Then $kz=\ii^2= (\ii^2)^t= k^tz$ and $k^{tb}z =(\ii^2)^{tb} =(\ii^{a }\ii)^2 =((\ii^{-1})^{a^2})^2 =(\ii^{-2} )^{a^2} = (k^{-1})^{a^2}z  $ for each $t\in T_0$. This implies that $k  \in \bigcap_{t\in T}K^t$, so $k =1 $ by \Cref{Kcorefree}.  
This proves \eqref{lema:Q8diagonalf}. 

Summarizing, we have that $\ii^4=a^3=1$, $\ii^{a^2}\ii^a\ii=1$ and that $\ii^2=z$ commutes with $a$. 
This implies that $\GEN{a,\ii}\cong \SL(2,3)$ since the above relations form a presentation of $\SL(2,3)$, and the order of $\GEN{\ii,a}$ is at least $24$ because $|\ii|=4$, $|a|=3$ and $\ii^a\not\in \{\ii,\ii^{-1}\}$. This proves \eqref{lema:Q8diagonala} and \eqref{lema:Q8diagonalb} because $\GEN{\ii,\ii^a}$ has order $8$ and does not intersect $K$. Now, \eqref{lema:Q8diagonale} follows from \eqref{lema:Q8diagonalb} and \eqref{lema:Q8diagonald}.

\end{proof}

Fix $\ii$ satisfying the conditions in \Cref{lema:Q8diagonal}.
Observe that $\SL(2,3)$ has six elements of order $4$ and they are not central.
Arguing as in the proof of \Cref{fact:removeCasea}, we derive that there is $u\in W\setminus \{1\}$ such that $u^{\ii}=u^2$.
We denote
	$$\jj=\ii^a, \quad \kk=\jj^a, \quad u_\ii=u, \quad u_{\ii^{-1}}=u_{\ii}^{\jj}, \quad u_\jj=u_\ii^a, \quad u_{\jj^{-1}}=u_{\ii^{-1}}^a, \quad u_\kk=u_{\jj}^a, \qand u_{\kk^{-1}}=u_{  \jj^{-1}}^a.$$
Then  $W=\GEN{u_\ii}\times \GEN{u_{\ii^{-1}}}$,  $I=\{\ii,\ii^{-1},\jj,\jj^{-1},\kk,\kk^{-1}\}$ is the set of elements of order $4$ of $\GEN{a,\ii}$, and for every $w\in W\setminus \{1\}$ and $l\in I$   $$w^l = w^2 \text{ if and only if } w\in \GEN{u_{l}}.$$
Furthermore, the orbits of the action of $H_2$ on $W$ are
	$$\O_\ii = \GEN{u_\ii}\cup \GEN{u_{\ii^{-1}}}\setminus \{1\}, \quad 
	\O_\jj = \GEN{u_\jj}\cup \GEN{u_{\jj^{-1}}}\setminus \{1\} \qand 
	\O_\kk =\GEN{u_\kk}\cup \GEN{u_{\kk^{-1}}}\setminus \{1\},$$
and they are permuted cyclically by the action of $\GEN{a}$, i.e.
	$$\O_\ii^a=\O_\jj, \quad \O_\jj^a=\O_\kk \qand \O_\kk^a=\O_\ii.$$ 
 
 Recall that $V=W^S= \bigoplus_{t\in T}W^t$.  
In the remainder of the proof we fix the following element of $V$:
$$\mathbf{v}=\prod_{t\in T} w_t^t, \text{ with } w_t = \begin{cases} 
u_\ii, & \text{if } t\in T_0; \\
u_{\jj^{-1}}, & \text{if } t=b ; \\
u_{\kk^{-1}} & \text{if } t \in T_0b \setminus{\{b \}}.\end{cases}$$
 By rationality, there is a  $2 $-element $s\in S$ such that $\mathbf{v}^s=\mathbf{v}^2$, which will be fixed throughout.

  We use the bijection $T\to H\backslash S$, $t\mapsto Ht$, and the action of $S$ on right $H$-cosets of $S$ by multiplication on the right, to define an action of $S$ on $T$. 
	 To avoid confusion with multiplication in $S$, we denote the action of $s\in S$ on $t\in T$ by $t^{(s)}$, i.e. $t^{(s)}$ denotes the unique element of $T$ satisfying 
	 	$$Ht^{(s)}=Hts.$$
	 So we have $t^{(1)}=t$ and $t^{(s_1s_2)}=(t^{(s_1)})^{(s_2)}$   for every $s_1,s_2\in S$. 
	 The kernel of this action is the core of $H$ in $S$. As $A$ is normal in $S$ and is contained in $H$, it follows that $t^{(s)}=t$ for every $s\in A$. 
	 As $N$ is normal in $S$, under this action, the elements of $N$ stabilize $T_0$ and $T_0b$, while the elements of $S\setminus N$ interchange $T_0$ and $T_0b$.

\begin{lemma}\label{lemma:(s)action}
$s\in  K \ii T_0$, $s$ stabilizes $T_0$ and $T_0b$, and $b^{(s)}=b$. Moreover,  
$$Kts(t^{(s)})^{-1}=\begin{cases} 
K\ii , & \text{if } t\in T_0; \\
K\jj^{-1} , & \text{if } t= b; \\
K\kk^{-1} , & \text{if } t\in T_0b\setminus \{b\}.\end{cases}$$
Hence, for every $w\in W$,
\begin{equation}\label{accions}
w^{ts}=\begin{cases} 
w^{\ii t^{(s)}}, & \text{if } t\in T_0; \\
w^{\jj^{-1} t^{(s)}}, & \text{if } t= b; \\
w^{\kk^{-1} t^{(s)}}, & \text{if } t\in T_0b\setminus \{b\}.\end{cases}
\end{equation} 
\end{lemma}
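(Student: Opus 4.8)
### Lemma~\ref{lemma:(s)action}: Proof Proposal

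The plan is to pin down $s$ via the three facts assembled above: the explicit form of $\mathbf{v}$, the rationality relation $\mathbf{v}^s=\mathbf{v}^2$, and the orbit structure of $H_2$ on $W$. First I would use that $s$ is a $2$-element and that, in the $(s)$-action on $T$, the elements of $N$ stabilize $T_0$ and $T_0 b$ while those of $S\setminus N$ interchange them. Applying $s$ to $\mathbf{v}=\prod_{t\in T}w_t^t$ gives $\mathbf{v}^s=\prod_{t}w_t^{ts}=\prod_{t}w_t^{k_t\,t^{(s)}}$, where $k_t\in H$ satisfies $Hts=Ht^{(s)}$, i.e. $ts=k_t\,t^{(s)}$ with $k_t\in H$. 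Comparing this against $\mathbf{v}^2=\prod_t (w_t^2)^t$ component by component (each $W^t$ is an independent summand of $V=\bigoplus_{t\in T}W^t$), I read off for each $t$ that $w_{t^{(s)}}^{\,k_t}=w_t^2$, viewing $k_t$ as an element of $H/K^{t^{(s)}}$ acting on $W^{t^{(s)}}$ suitably transported. The key point is that the target exponent $w_t^2$ forces $k_t$ to be one of the order-$4$ elements $\ii,\jj,\kk$ (up to inverses and $K$), because of the characterization $w^l=w^2$ iff $w\in\GEN{u_l}$ for $l\in I$.

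The heart of the argument is a bookkeeping matchup. The recipe for $\mathbf v$ plants $u_\ii$ on all of $T_0$, $u_{\jj^{-1}}$ at $b$, and $u_{\kk^{-1}}$ on $T_0 b\setminus\{b\}$, and the $H_2$-orbits $\O_\ii,\O_\jj,\O_\kk$ are \emph{distinct}. Since the coset coefficient $k_t$ must carry the entry sitting at position $t^{(s)}$ to (the square of) the entry at position $t$, and since distinct orbits cannot be interchanged by an element of $H_2$, the orbit of $w_{t^{(s)}}$ must equal the orbit of $w_t$. I would first argue $s$ stabilizes $T_0$ and $T_0 b$ separately: if $s\in S\setminus N$ it would interchange them, but then the $T_0$-entries (all in $\O_\ii$) would have to match the $T_0 b$-entries (in $\O_\jj$ or $\O_\kk$), which is impossible. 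Hence $s\in N$, consistent with $a^s=a$-type behavior. With $s\in N$ stabilizing each block, the entry $u_\ii$ at $t\in T_0$ is matched to $u_\ii$ at $t^{(s)}\in T_0$, forcing $k_t\in K\ii$; the entry $u_{\jj^{-1}}$ at $b$ must be matched within $\{u_{\jj^{-1}},u_{\kk^{-1}}\}$, and the orbit constraint together with the $\jj^{-1}$-versus-$\kk^{-1}$ distinction pins $b^{(s)}=b$ and $k_b\in K\jj^{-1}$; the remaining entries give $k_t\in K\kk^{-1}$ for $t\in T_0 b\setminus\{b\}$. Finally, reading $k_t=ts(t^{(s)})^{-1}$ modulo $K$ yields the displayed coset values, and $s=k_1\cdot 1^{(s)}\in K\ii\,T_0$ follows by taking $t=1$ (so $1^{(s)}\in T_0$, $k_1\in K\ii$).

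To extract \eqref{accions} I would translate these coset identities into the action on an arbitrary $w\in W$: from $ts=k_t\,t^{(s)}$ with $k_t\in K\ii$ (resp.\ $K\jj^{-1}$, $K\kk^{-1}$) and the fact that $K$ acts trivially on $W$, we get $w^{ts}=w^{k_t t^{(s)}}=w^{\ii t^{(s)}}$ (resp.\ with $\jj^{-1}$, $\kk^{-1}$) for each block, which is exactly the claimed formula. The main obstacle I anticipate is the careful matching within the block $T_0 b$: one must verify that the single anomalous entry $u_{\jj^{-1}}$ at $t=b$ cannot be permuted to a position carrying $u_{\kk^{-1}}$, which is where the precise choice of $\jj^{-1}$ versus $\kk^{-1}$ (distinct $H_2$-orbits $\O_\jj\neq\O_\kk$) does the essential work and fixes $b^{(s)}=b$. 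Everything else is a transcription of the orbit combinatorics, using only that $V=\bigoplus_{t\in T}W^t$ is a direct sum so the matching is forced summand-by-summand.
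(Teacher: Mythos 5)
Your overall strategy matches the paper's (expand $\mathbf{v}^s=\mathbf{v}^2$ summand by summand via $ts=k_t\,t^{(s)}$ and exploit the orbit bookkeeping), but there is a genuine gap at the step where you rule out $s\in S\setminus N$. You claim that if $s$ interchanged $T_0$ and $T_0b$ then ``the $T_0$-entries (all in $\O_\ii$) would have to match the $T_0b$-entries (in $\O_\jj$ or $\O_\kk$), which is impossible'' because distinct $H_2$-orbits cannot be interchanged. However, the connecting elements $k_t=ts(t^{(s)})^{-1}$ lie in $H=H_2\rtimes\GEN{a}$, not in $H_2$, and $H$ acts transitively (indeed, modulo $K$, regularly) on $W\setminus\{1\}$, so a single matching equation across the two blocks never yields a contradiction: writing $s=ha^idb^j$ and extracting the $a^i$-part, such an equation only says that $\O_\ii^{a^{\pm i}}$ equals $\O_\jj$ or $\O_\kk$, i.e.\ it constrains $i$ rather than excluding $j=1$. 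In fact, if all the $T_0b$-entries of $\mathbf{v}$ were equal, the block swap could not be excluded by orbit considerations at all.

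The paper closes this gap by comparing \emph{two} positions inside $T_0b$: the position $b$ (carrying $u_{\jj^{-1}}$) and some $t_0b$ with $t_0\neq 1$ (carrying $u_{\kk^{-1}}$), whose existence requires $|T_0|>1$, i.e.\ \Cref{lema:existed(ificil)} --- a hypothesis your proposal never invokes. Both positions are sent into $T_0$, where every entry is $u_\ii$; after absorbing the common $a^i$-factor one gets $h_1,h_2\in H\cap N_2=H_2$ with $u_{\jj^{-1}}^{h_1}=u_{\kk^{-1}}^{h_2}$, hence an element of $H_2$ carrying $u_{\jj^{-1}}$ to $u_{\kk^{-1}}$, contradicting $\O_\jj\cap\O_\kk=\emptyset$. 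So the ``anomalous entry'' at $b$ is essential already for proving $s\in N$, not only for pinning down $b^{(s)}=b$ as you suggest. Two smaller points: the matching should read $w_t^{k_t}=w_{t^{(s)}}^2$ rather than $w_{t^{(s)}}^{k_t}=w_t^2$; and once $s\in N$ is known you must still argue that the $a$-part of $s$ is trivial before concluding $k_t\in K\ii$ (the paper does this by observing $u_\ii^2\in\O_\ii\cap\O_\ii^{a^i}$, forcing $i=0$). The remaining steps of your proposal --- deducing $b^{(s)}=b$, the coset values, and \eqref{accions} --- do follow the paper's line once $s\in N$ and $i=0$ are secured.
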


\begin{proof}
As $T=T_0\cup T_0b$ is a right transversal of $H$ in $S$ and $H=H_2\rtimes \GEN{a}$, there are unique $h\in H_2$, $i\in \{0,1,2\}$, $d\in T_0$ and $j\in \{0,1\}$ such that $s=ha^id b^j$.

We first prove that $j=0$. 
By means of contradiction suppose that $j=1$. 
By \Cref{lema:existed(ificil)}, $[S:H]>2$; hence there is some $t_0\in T_0\setminus \{1\}$.
Then
	$$b^{(s)}\in T_0, \quad (t_0b)^{(s)}\in T_0, \quad u_{\jj^{-1}}^{bs} = (u_\ii^{b^{(s)}})^2   \qand u_{\kk^{-1}}^{t_0bs}=(u_\ii^{(t_0b)^{(s)}})^2.$$
Therefore $u_{\jj^{-1}}^{bsb^{(s)^{-1}}}=u_\ii^2=u_{\kk^{-1}}^{t_0bs((t_0b)^{(s)})^{-1}}$, and hence the following elements belong to $H\cap N_2=H_2$:
	\begin{eqnarray*}
		h_1&=&bs(b^{(s)})^{-1}a^i=bhdb(b^{(s)})^{-1}, \\ h_2&=&t_0bs((t_0b)^{(s)})^{-1}a^i=t_0bhdb((t_0b)^{(s)})^{-1}.
		\end{eqnarray*}
Thus $u_{\kk^{-1}}=u_{\jj^{-1}}^{h_1h_2^{-1}} \in \O_{\kk}\cap \O_{\jj}=\emptyset$, a contradiction.

Therefore $j=0$, so $s\in N$, and hence $t^{(s)}\in T_0$ for every $t\in T_0$, and $b^{(s)}\in T_0b$.
Thus $u_\ii^{thd(t^{(s)})^{-1}a^{i}}=u_\ii^{ts(t^{(s)})^{-1}}=u_\ii^2\in \O_\ii\cap \O_\ii^{a^i}$. 
Hence $i=0$ and $ts(t^{(s)})^{-1}\in  K\ii$ for each $t\in T_0$. Applying this to $t=1$ yields $s\in  K \ii T_0$. 

If $b^{(s)}\ne b$ then $u_{\jj^{-1}}^{bs(b^{(s)})^{-1}}=u_{\kk^{-1}}^2$. 
Therefore $bs(b^{(s)})^{-1}\in N_2\cap H=H_2$ and $u_{\kk^{-1}}\in \O_{\jj}\cap \O_{\kk}=\emptyset$, a contradiction. 
Thus $b^{(s)}=b$ and $u_{\jj^{-1}}^{bsb^{-1}}=u_{\jj^{-1}}^2$, so that $bsb^{-1}\in  K \jj^{-1}$. 
Moreover, if $t \in  T_0b\setminus \{b\}$ then $u_{\kk^{-1}}^{ts(t^{(s)})^{-1}}=u_{\kk^{-1}}^2$ so that $ts(t^{(s)})^{-1}\in  K\kk^{-1}$. 
 
\end{proof}

For a positive integer $n$ consider  the maps
\begin{eqnarray*}
\alpha_n:S \to  S; & &  x \mapsto  x^{a^{n-1}} x^{a^{n-2}}\dots x^{a^2  }x^a x, \text{ and}\\
\beta_n:S \to  S; &  & x \mapsto  x^{a^{2(n-1)}} x^{a^{2(n-2)}}\dots x^{a^4}x^{a^2} x.
\end{eqnarray*}
Observe that 
	$$(ax)^n=a^n\alpha_n(x), \quad (a^2x)^n=a^{2n}\beta_n(x) \qand \beta_n(x)^b=\alpha_n(x^b).$$  
We now prove by induction on $n$ that for every $w\in W$ we have 
\begin{equation}\label{accionalphan}
w^{t\alpha_n(s)}=\begin{cases} 
w^{\alpha_n(\ii)t^{(\alpha_n(s))}}, & \text{if } t\in T_0; \\
w^{\beta_n(\jj^{-1})t^{(\alpha_n(s))}}, & \text{if } t=b; \\ 
w^{\beta_n(\kk^{-1})t^{(\alpha_n(s))}}, & \text{if } t \in T_0b \setminus{\{b \}}.
\end{cases}
\end{equation}
For $n=1$, simply apply \eqref{accions}. 
For the induction step,  recall that $A$ is contained in the kernel of the action of $S$ on $T$ so that $t^{(a)}=t$ and $t^{(x^a)}=t^{(x)}$ for every $t\in T$ and $x\in S$.
By \Cref{lemma:(s)action}, $s$, and thus also $\alpha_n(s)$, stabilize $T_0$ and $T_0b$ and fix $b$.
If $t\in T_0$ then 
\begin{align*}
w^{t\alpha_{n+1}(s)}& = w^{t\alpha_n(s)^a s}  = (w^{a^{-1}}) ^{t\alpha_n(s) a  s}  
= (w^{a^{-1}}) ^{\alpha_n(\ii)t^{(\alpha_n(s))} as}  = (w^{\alpha_n(\ii)^a})^{t^{(\alpha_n(s))}s} \\ &
=	(w^{\alpha_n(\ii)^a})^{t^{(\alpha_n(s)^a)}s} =(w^{\alpha_n(\ii)^a})^{\ii t^{( \alpha_n(s)^a s)} }  
= ( w^{\alpha_{n+1}(\ii)})^{t^{(\alpha_{n+1}(s))}}
\end{align*}
If $t\in T_0b$, then the induction step is slightly different: taking $l$ either $\jj^{-1}$ or $\kk^{-1}$, depending on whether $t=b$ or not,
\begin{align*}
w^{t\alpha_{n+1}(s)}& = w^{t\alpha_n(s)^a s}  = (w^a)^{t\alpha_n(s) a  s}  
= (w^a)^{\beta_n(l)t^{(\alpha_n(s))} as}  = (w^{\beta_n(l)^{a^{-1}}})^{t^{(\alpha_n(s))}s} \\ &
=	(w^{\beta_n(l)^{a^{-1}}})^{t^{(\alpha_n(s)^a)}s} = 
	(w^{\beta_n(l)^{a^{-1}}})^{lt^{( \alpha_n(s)^a s)} }  
= ( w^{\beta_{n+1}(l)})^{t^{(\alpha_{n+1}(s))}}.
\end{align*}

Let $s_0= bsb^{-1}$. By \Cref{lemma:(s)action}, $s_0\in K\jj^{-1}$. Thus $Ka^2s_0$ has order $3$ in $H/K$ and hence $a^2s_0$ has order  $3\cdot 2^o $ for some non-negative integer $o$. Let $n$ be the smallest even integer greater or equal than $o$. We define $\hat s = \alpha_{2^n}(s)$. Since $n$ is even, $2^n\equiv 1 \mod 3$ and hence $a^2\beta_{2^n}(s_0)=(a^2s_0)^{2^n}$ has order $3$. Thus $\hat s = \alpha_{2^n}(s)=\beta_{2^n}(s_0)^b\in A\cap N_2$ by \Cref{lemma:(s)action}, so it follows that $\hat s\in A_2$. 
This implies that $t^{(\hat s)}=t$ for every $t\in T$. 
Furthermore, by \Cref{lema:Q8diagonal}, $\ii^{a^2}\ii^a\ii=(\jj^{-1})^{a}(\jj^{-1})^{a^2}\jj^{-1}=(\kk^{-1})^a(\kk^{-1})^{a^2}\kk^{-1}=1$. 
As $2^{n}\equiv 1 \mod 3$, we  have 
$$\alpha_{2^n}(\ii)=\ii, \quad \beta_{2^n}(\jj^{-1})=\jj^{-1} \qand \beta_{2^n}(\kk^{-1})=\kk^{-1}.$$
Then, by \eqref{accionalphan}, for every $w\in W$,
	$$w^{t\hat s t^{-1}} = \begin{cases} w^{\alpha_{2^n}(\ii)}=w^{\ii}, & \text{if } t\in T_0; \\
		w^{\beta_{2^n}(\jj^{-1})}=w^{\jj^{-1}}, & \text{if } t=b; \\ 
		w^{\beta_{2^n}(\kk^{-1})}=w^{\kk^{-1}}, & \text{if } t \in T_0b \setminus{\{b\}}. \end{cases}$$
In particular $u_\ii^{\hat s}=u_\ii^\ii=u_\ii^2$, so that $\hat s= k \ii$ for some $k\in K$.
If $t\in T_0$ then 
$$w^{ tk t^{-1}\ii} = w^{t\hat s t^{-1}} =w^{\ii}.$$
Furthermore, if $t\in T_0b$ then $t\ii t^{-1} = \ii^a\ii = (\ii^{a^2})^{-1}=\kk^{-1}$ and hence
$$w^{ tkt^{-1} \kk^{-1}} = w^{t\hat s t^{-1}} = \begin{cases} w^{\jj^{-1}} & \text{if } t=b; \\ w^{\kk^{-1}}, & \text{if } t\in T_0b\setminus \{b\}.\end{cases}$$
Then 
$$k\in \bigcap_{t\in T\setminus \{b\}} K^t \qand w^{\jj^{-1}}=w^{b kb^{-1}\kk^{-1}} =w^{bkb^{-1}\jj^{-1} \ii^{-1}} =w^{bkb^{-1} \ii \jj^{-1}}.$$ 
Thus $bkb^{-1}\ii\in K $, so that $bkb^{-1}\in K\ii^{-1}$ and $bk^2b^{-1} \in   Kz$. Hence,
	$$k^2 \in K^bz \cap \bigcap_{t\in T\setminus \{b\}} K^t.$$ 
In particular, $k^2\in \bigcap_{t\in T_0} K^t\setminus \{1\}$, and as $a$ normalizes $K^b$ and commutes with $z$, $[k^2,a]\in \bigcap_{t\in T} K^t=1$. This yields the final contradiction with \Cref{notClaimOmega}. 

\bibliographystyle{amsalpha}
\bibliography{cut1}

\end{document}